\author{Atte Aalto} \title[Spatial discretization in {K}alman filtering]{Spatial
  discretization error in {K}alman filtering for discrete-time infinite
  dimensional systems} 
\keywords{Kalman filter, infinite dimensional systems,
  reduced-order filtering, spatial discretization, optimal
  estimation, Riccati equation}
\subjclass[2010]{93E11, 93E25}
\newcommand{\norm}[1]{\left| \! \left| #1 \right| \! \right|}
\newcommand{\ip}[1]{\left<#1\right>}
\newcommand{\Ex}[1]{\mathbb{E}\! \left(#1\right)}
\newcommand{\Cov}[1]{\textup{Cov}\left[#1\right]}
\newcommand{\tr}{\textup{tr}}
\newcommand{\bbm}[1]{\left[\begin{matrix} #1 \end{matrix}\right]}
\newcommand{\sbm}[1]{\left[\begin{smallmatrix} #1
             \end{smallmatrix}\right]}
\newcommand{\sX}{\mathcal{X}}
\newcommand{\sY}{\mathcal{Y}}
\begin{document}
\maketitle


\begin{abstract}
  \noindent We derive a reduced-order state estimator for
  discrete-time infinite dimensional linear systems with finite
  dimensional Gaussian input and output noise. This state estimator is
  the optimal one-step estimate that takes values in a fixed finite
  dimensional subspace of the system's state space --- consider, for
  example, a Finite Element space. We then derive a Riccati difference
  equation for the error covariance and use sensitivity analysis to
  obtain a bound for the error of the state estimate due to the state
  space discretization.
\end{abstract}

\newtheorem{definition}{Definition}[section]
\newtheorem{proposition}[definition]{Proposition}
\newtheorem{theorem}[definition]{Theorem}
\newtheorem{remark}[definition]{\it Remark}
\newtheorem{example}[definition]{Example}
\newtheorem{lemma}[definition]{Lemma}
\newtheorem{corollary}[definition]{Corollary}
\newtheorem{algorithm}[definition]{\it Algorithm}

\section{Introduction}

In this paper, we consider the state estimation problem for infinite
dimensional discrete time linear systems with finite dimensional
Gaussian input and output noise. The objective is to find the optimal
one-step state estimate from a given subspace of the original state
space (for example a Finite Element space). We shall also find a bound
for the error due to the spatial discretization to the state estimate
at the infinite time limit.

The dynamics of the system under consideration is given by
\begin{equation} \label{eq:system}
\begin{cases} 
x_k=Ax_{k-1}+Bu_k, \\
y_k=Cx_k+w_k, \\
x_0\sim N(m,S_0)
\end{cases}
\end{equation}
where $x_k \in \sX$, $A \in \mathcal{L}(\sX)$, $B \in
\mathcal{L}(\mathbb{C}^q,\sX)$, and $C \in
\mathcal{L}(\sX,\mathbb{C}^m)$. The state space $\sX$ is a separable
Hilbert space.
The noise processes are assumed to be Gaussian, $u_k \sim N(0,U)$ and
$w_k \sim N(0,R)$ where $U \in \mathbb{R}^{q \times q}$ and $R \in
\mathbb{R}^{m \times m}$ are positive-definite and symmetric. It is
also assumed that $u$, $w$, and $x_0$ are mutually independent, and the
noises at different times are independent.

When measurements $y_j$ for $j=1,...,k$ are known, the state estimate
$\hat{x}_k$ minimizing the conditional expectation
$\Ex{\norm{\hat{x}_k-x_k}_{\sX}^2 \Big|\{y_j, \, j=1,...,k\} }$ is
given by $\hat x_k=\Ex{x_k|\{y_j, \, j \le k\}}$. In the presented
Gaussian case, the conditional expectation $\hat x_k$ can be computed
recursively from $\hat x_{k-1}$ and $y_k$. This recursive scheme is
known as the Kalman filter, originally presented in \cite{Kalman} in
the finite dimensional setting. For infinite dimensional systems, the
generalization is straightforward and it can be done, for example,
using the presentation by Bogachev \cite[Section~3.10]{Bogachev} or
the more explicit presentation \cite{Krug} by Krug. Let us present a
short introduction. It is well known that linear combinations of
Gaussian random variables
are also Gaussian random variables. Further, if $\bbm{h_1 \\
  h_2} \sim N\left( \bbm{m_1 \\ m_2},\bbm{P_{11} & P_{12} \\ P_{12}^*
    & P_{22}} \right)$ where $h_1 \in \sX$ and $h_2$ is finite
dimensional, then
\begin{equation} \label{eq:cond}
\Ex{h_1|h_2} =m_1+P_{12}P_{22}^+(h_2-m_2)
\end{equation}
and
\begin{equation} \label{eq:cond_cov}
\Cov{h_1-\Ex{h_1|h_2},h_1-\Ex{h_1|h_2}}=P_{11}-P_{12}P_{22}^+P_{12}^*.
\end{equation}
Remark that
$\Cov{\Ex{h_1|h_2},\Ex{h_1|h_2}}=P_{12}P_{22}^+P_{12}^*$
so that in fact,
\begin{align} \label{eq:pythagoras}
&\Cov{h_1 -\Ex{h_1|h_2},h_1 -\Ex{h_1|h_2}} \\ & \hspace{10mm} \nonumber =\Cov{h_1,h_1}-\Cov{\Ex{h_1|h_2},\Ex{h_1|h_2}}.
\end{align}

Applying \eqref{eq:cond} and \eqref{eq:cond_cov} to the jointly
Gaussian random variable $[x_k,y_1,...,y_k]$ and the block matrix
inversion formula
\begin{equation} \label{eq:inv}
\! \sbm{F \! & G \! \\ G^T & H}^{\!-1} \!\! =\sbm{F^{-1} \!
  +F^{-1}G(H \!-G^TF^{-1}G)^{-1}G^TF^{-1}  & -F^{-1}G(H
  \!-G^TF^{-1}G)^{-1} \\ -(H-G^TF^{-1}G)^{-1}G^TF^{-1} &
  (H-G^TF^{-1}G)^{-1}}
\end{equation}
to $P_{22} \, \widehat = \, \Cov{[y_1,...,y_k],[y_1,...,y_k]}$
eventually leads to the \emph{full state Kalman filter} equations
\begin{equation} \label{eq:KF}
\hat x_k=A\hat x_{k-1}+K_k^{(F)}(y_k-CA\hat x_{k-1})
\end{equation}
where $K_k^{(F)}$ for $k=1,2,...$ are called \emph{Kalman gains}, and
they are given by $ K_k^{(F)}=\tilde P_k^{(F)}C^*(C\tilde
P_k^{(F)}C^*+R)^{-1}$, and the \emph{Riccati difference equation} (RDE)
\begin{equation} \label{eq:full_RDE}
\begin{cases}
  \tilde P_k^{(F)}=AP_{k-1}^{(F)}A^*+BUB^*, \\
  P_k^{(F)}=\tilde P_k^{(F)}-\tilde P_k^{(F)}C^*(C\tilde P_k^{(F)}C^*+R)^{-1}C\tilde P_k^{(F)}.
\end{cases}
\end{equation}
Here $P_k^{(F)}=\Cov{x_k-\hat x_k,x_k-\hat x_k}$ is the
\emph{(estimation) error covariance} and $\tilde
P_k^{(F)}=\Cov{x_k-\Ex{x_k|[y_1,...,y_{k-1}]},x_k-\Ex{x_k|[y_1,...,y_{k-1}]}}$
is the \emph{prediction error covariance}. The initial values are
$\hat x_0=m$ and $P_0^{(F)}=S_0$.
The superscript $(F)$ refers to full Kalman filter estimate and it is
used for later purposes.


Numerical implementation of the Kalman filter to infinite dimensional
systems requires discretization of the state space. If the
implementation is then carried out directly to the discretized system,
the result is not optimal. In particular, if the state estimation is
performed online, the restrictions in computing power might prevent
using a very fine mesh for the simulations. In such cases it is
beneficial to take the discretization error into account in the state
estimation. The purpose of this paper is to derive the optimal
one-step state estimate that takes values in the discretized state
space, and to analyze the discrepancy between the proposed state
estimate and the full state Kalman filter estimate.


We tackle this task in Section~\ref{sec:estimate} by first fixing the
structure of the filter in \eqref{eq:structure}. In the spirit of
Kalman filtering, we require that the $k^{\textrm{th}}$ estimate
depends only on the previous estimate and the current measured output
$y_k$.  We then find the expression for a filter with such structure.
The rest of the paper is organized as follows: In
Section~\ref{sec:asymptotics}, we derive a Riccati difference equation
for the estimation error covariance for the proposed method.  Compared
to \eqref{eq:full_RDE}, this equation contains an additional term due
to the discretization.  In Section~\ref{sec:error}, we use sensitivity
analysis for algebraic Riccati equations --- developed by Sun
in~\cite{Sun} --- to determine a bound for the error due to the
discretization at the infinite time limit. In short, it is shown that
when the approximation properties of the subspace improve at some rate
as the spatial discretization is refined, then the finite dimensional
state estimate converges to the full state Kalman filter estimate at
least with the same convergence rate.  In Section \ref{sec:example},
the proposed method is implemented to one dimensional wave equation
with damping, and the result is compared with the Kalman filter that
does not take into account the spatial discretization error. 


The ``engineer's approach'', \emph{i.e.}, the direct Kalman filter
implementation to the discretized system is studied in
\cite{Bensoussan} by Bensoussan and in \cite{Galerkin_filter} by
Germani \emph{et al}. The latter contains a convergence result for the
finite dimensional state estimate (in continuous time) with a
convergence rate estimate. They also show convergence of the solutions
of the corresponding Riccati differential equations in the space of
continuous Hilbert-Schmidt operator-valued functions.  A method where
the discretization error is taken into account is proposed by
Pik\-ka\-rai\-nen in \cite{Pikkarainen}. Their approach is based on
keeping track of the discretization error mean and covariance. Then
with certain approximations on the error distributions, they too end
up with a one-step method that is numerically implemented in
\cite{Hutt_Pikk} by Hut\-tu\-nen and Pik\-ka\-rai\-nen.

Our method is very closely related to the reduced-order filtering
methods that have been studied since the introduction of the Kalman
filter itself; see \emph{e.g.}, \cite{Bernstein, Bernstein_DPS,
  simon,sims,stub}.  The articles by Bernstein and Hyland,
\cite{Bernstein,Bernstein_DPS} yield a state estimator similar to ours
for continuous time. They obtain algebraic optimality equations for
the error covariance and Kalman gain limits as the time index $k \to
\infty$, in terms of ``optimal projections''. Our solution is somewhat
more straightforward, and we obtain the error covariances and Kalman
gains for all time steps. A similar method is developed by Simon in
\cite{simon} with a more restrictive assumption on the filter
structure. For a more thorough introduction and review on the earliest
results on reduced-order filtering techniques, we refer to \cite{stub}
by Stubberud and Wismer and to \cite{sims} by Sims.

Infinite dimensional Kalman filter has numerous applications. The
practical application that motivated the paper \cite{Pikkarainen} is
the electrical impedance process tomography, studied by Sepp\"anen
\emph{et al}. in \cite{EIT}. Infinite dimensional Kalman filter
  implementation to optical tomography problem can be found in
  \cite{opt_tom} by Hiltunen \emph{et al}.  Quasiperiodic phenomena is
  studied by Solin and S\"arkk\"a in \cite{Solin} using the infinite
  dimensional Kalman filter. They use a weather prediction model and
  fMRI brain imaging as example cases. The numerical treatment is done
  using truncated eigenbasis approach instead of using FEM as in the
  example of this article.





\subsubsection*{Notation}  

We denote by $\mathcal{L}(\sX_1,\sX_2)$ the space of bounded linear
operators from $\sX_1$ to $\sX_2$, and
$\mathcal{L}(\sX)=\mathcal{L}(\sX,\sX)$. The subspace of self-adjoint
operators in $\sX$ is denoted by $\mathcal{L}^*(\sX)$. The spectrum of
an operator is denoted by $\sigma(\cdot)$. The sigma algebra generated
by a random variable (or random variables) is denoted by
$\mathcal{S}(\cdot)$. The Moore-Penrose pseudoinverse of a matrix $T$
is denoted by $T^+$.

The covariance of square integrable random variables $x_1 \in \sX_1$
and $x_2 \in \sX_2$ is the operator in $\mathcal{L}(\sX_2,\sX_1)$
defined for $h \in \sX_2$ by \newline
$\Cov{x_1,x_2}h := \Ex{(x_1-\Ex{x_1})\ip{x_2-\Ex{x_2},h}_{\sX_2}}$.


\section{The reduced-order state estimate} \label{sec:estimate}

Let $\Pi_s:\sX \to \sX$ be an orthogonal projection from the state
space $\sX$ (a separable, complex Hilbert space) to an $n$-dimensional
subspace of $\sX$ (\emph{e.g.}, a finite element space). Assume we have a
coordinate system in $\mathbb{C}^n$ associated to this subspace, such
that the inner product is preserved, and denote by $\Pi:\sX \to
\mathbb{C}^n$ the representation of the projection $\Pi_s$ in this
coordinate system. That is, $\ip{\Pi_s x_1,\Pi_s x_2}_{\sX}=\ip{\Pi
  x_1,\Pi x_2}_{\mathbb{C}^n}$ for $x_1,x_2 \in \sX$. Then it holds that
$\Pi \Pi^*=I \in \mathbb{C}^{n \times n}$ and $\Pi^*\Pi=\Pi_s$.

Finding an exact solution to the estimation problem of the finite
dimensional $\Pi x_k$ would require solving the full state Kalman
filtering problem and then projecting the estimate by $\Pi$. This, of
course, doesn't make much practical sense.
As mentioned above, we want to find the optimal state estimate $\tilde
x_k$ in $\Pi_s \sX$ that can be computed from the previous state
estimate $\tilde x_{k-1}$ and the current measurement $y_k$. More
precisely, we want to obtain $\tilde x_k$'s satisfying
\begin{equation} \label{eq:structure}
\begin{cases}
\tilde x_0 = \Pi m, \\
\tilde x_k = \Pi\Ex{ x_k | \tilde x_{k-1}, y_k}, \qquad k \ge 1,
\end{cases}
\end{equation}
where $x_k$ satisfy \eqref{eq:system}.  One thing to notice here is
that in contrast to the full state filtering, the conditioning is not
done over a filtration, because --- loosely speaking --- we lose some
information when we only take into account the last measurement and
the last estimate of the state projection.  Without loss of
generality, we may assume that $m=0$ (see Remark~\ref{rem:m_zero}).
Note that this
also implies $\Ex{x_k}=0$ and further, $\Ex{\tilde x_k}=0$ and
$\Ex{y_k}=0$ for all $k \ge 1$.

We then proceed to find a concrete representation for $\tilde
x_k$. From \eqref{eq:structure} it can be inductively deduced that
$[x_{k-1},\tilde x_{k-1}]$ is Gaussian and from \eqref{eq:system},
also $[x_k,\tilde x_{k-1},y_k]$ is Gaussian.  The reasoning leading to
the full state Kalman filter equations utilizing equations
\eqref{eq:cond} and \eqref{eq:cond_cov} together with the block matrix
inversion formula \eqref{eq:inv} can be generalized for any Gaussian
random variable $[h_1,h_2,h_3]$ with $h_1 \in \sX$, and $h_2$ and
$h_3$ finite dimensional, to obtain
\begin{align} \label{eq:cond3}
  \Ex{h_1|h_2,h_3}=&\Ex{h_1|h_2}+\Cov{h_1-\Ex{h_1|h_2},h_3-\Ex{h_3|h_2}}
  \times \\ \nonumber
  &\times \Cov{h_3-\Ex{h_3|h_2},h_3-\Ex{h_3|h_2}}^{-1}(h_3-\Ex{h_3|h_2}).
\end{align}
The corresponding equation can be obtained for the covariance
operator. The full state Kalman filter equations \eqref{eq:KF} and
\eqref{eq:full_RDE} are obtained by applying \eqref{eq:cond3} to
$h_1=x_k$, $h_2=[y_1,...,y_{k-1}]$, and $h_3=y_k$. In what follows, we
obtain $\tilde x_k$ by applying \eqref{eq:cond3} to $h_1=x_k$,
$h_2=\tilde x_{k-1}$, and $h_3=y_k$.

Since $m=0$, there exists an operator $Q_{k-1} \in
\mathcal{L}(\mathbb{R}^n,\sX)$ such that
\begin{equation} \label{eq:Q_def} \Ex{x_{k-1}|\tilde
    x_{k-1}}=Q_{k-1}\tilde x_{k-1}
\end{equation}
and the \emph{(estimation) error covariance} 
\begin{equation} \label{eq:err_cov}
P_{k-1}:=\Cov{x_{k-1}-Q_{k-1}\tilde x_{k-1},x_{k-1}-Q_{k-1}\tilde x_{k-1}}.
\end{equation}
Using these we can make an orthogonal decomposition of the state
\begin{equation} \nonumber x_{k-1}=\Ex{x_{k-1}|\tilde x_{k-1}}+
  \left(x_{k-1}-\Ex{x_{k-1}|\tilde x_{k-1}} \right)=:Q_{k-1}\tilde
  x_{k-1}+v_{k-1}
\end{equation}
where $v_{k-1} \sim N\big(0,P_{k-1}\big)$ and it is independent of the
estimate $\tilde x_{k-1}$. Together with \eqref{eq:system}, this gives
decompositions for the state $x_k$ and output $y_k$:
\begin{equation} \label{eq:decomposition}
\begin{cases}
x_k=Ax_{k-1}+Bu_k=A(Q_{k-1}\tilde x_{k-1}+v_{k-1})+Bu_k, \\
y_k=Cx_k+w_k=C(A(Q_{k-1}\tilde x_{k-1}+v_{k-1})+Bu_k)+w_k
\end{cases}
\end{equation}
from which one can deduce $\Ex{ x_k | \tilde x_{k-1}}= A Q_{k-1}\tilde
x_{k-1}$ and $\Ex{ y_k | \tilde x_{k-1}}= CA Q_{k-1}\tilde
x_{k-1}$.

Then we need the two covariances in \eqref{eq:cond3}. To this end,
define the \emph{prediction error covariance} for which we get a
representation from \eqref{eq:decomposition},
\begin{equation} \label{eq:pred_cov} \tilde P_k :=
  \Cov{x_k-\Ex{x_k|\tilde x_{k-1}},x_k-\Ex{x_k|\tilde x_{k-1}}}
=AP_{k-1}A^*+BUB^*.
\end{equation}
Using the two equations in \eqref{eq:decomposition}, we get
\begin{equation} \nonumber \Cov{x_k-\Ex{x_k|\tilde
      x_{k-1}},y_k-\Ex{y_k|\tilde
      x_{k-1}}}=\tilde P_kC^*
\end{equation}
and the covariance of output prediction error from the second equation in
\eqref{eq:decomposition}
\begin{equation} \nonumber \Cov{y_k-\Ex{y_k|\tilde
      x_{k-1}},y_k-\Ex{y_k|\tilde x_{k-1}}}=C \tilde P_k C^*+R.
\end{equation}
Now we have all the components for obtaining $\tilde x_k$ by
\eqref{eq:cond3},
\begin{equation} 
\label{eq:kf_proj}   \Ex{x_k|\tilde
    x_{k-1},y_k} =AQ_{k-1}\tilde x_{k-1}+\underbrace{\tilde P_kC^*\big(C \tilde P_k C^*\! +R
\big)^{-1}}_{=:K_k}(y_k \! -CAQ_{k-1}\tilde x_{k-1}).
\end{equation}

It remains to compute the error covariance $P_k$ defined in
\eqref{eq:err_cov}, and the operator $Q_k$ defined through
\eqref{eq:Q_def}. By \eqref{eq:pythagoras}, $P_k$ is given by
\[
P_k=S_k-Q_k\tilde S_k Q_k^*
\]
where $S_k=\Cov{x_k,x_k}$ is the \emph{state covariance} and $\tilde
S_k=\Cov{\tilde x_k,\tilde x_k}$ is the \emph{state estimate
  covariance}. The state $x_k$ is a linear combination of mutually
independent Gaussian random variables $x_{k-1}$ and $u_k$ and so $S_k$
can be obtained from the Lyapunov difference equation
\begin{equation} \label{eq:S_k}
S_k=AS_{k-1}A^*+BUB^* 
\end{equation}
and the first one, $S_0$, is the initial state covariance in
\eqref{eq:system}. Also, by \eqref{eq:decomposition},
\begin{equation} \label{eq:y_k_comp}
y_k-CAQ_{k-1}\tilde x_{k-1}=CAv_k+CBu_k+w_k \sim N\big(0,C\tilde P_kC^*+R\big)
\end{equation}
where $v_k$, $u_k$, and $w_k$ are mutually independent and also
independent with the state estimate $\tilde x_{k-1}$. Thus, by
\eqref{eq:kf_proj}, also $\tilde S_k$ is obtained from a Lyapunov
difference equation,
\begin{equation} \label{eq:tildeS} \tilde S_k=\Pi AQ_{k-1}\tilde
  S_{k-1}Q_{k-1}^*A^*\Pi^*+\Pi K_k \big(C\tilde P_kC^*+R\big) (\Pi K_k)^T
\end{equation}
with $\tilde S_0=0$.

By \eqref{eq:cond}, $Q_k$ is given by
\begin{equation} \label{eq:Q_k} Q_k=
\Cov{x_k,\tilde x_k} \tilde S_k^{-1}.
\end{equation}
The case when $\tilde S_k$ is not invertible is discussed in
Remark~\ref{rem:Q_add}.
The cross covariance operator
$V_k:=\Cov{x_k,\tilde x_k}$ in \eqref{eq:Q_k} can be computed by
``anchoring'' $x_k$ and $\tilde x_k$ to $\tilde x_{k-1}$ using
equations \eqref{eq:decomposition} and \eqref{eq:kf_proj} and the fact that
$Av_{k-1}+Bu_k \sim N\big(0,\tilde P_k\big)$,
\begin{equation} \nonumber \Cov{x_k,\tilde x_k}=AQ_{k-1} \tilde
  S_{k-1} Q_{k-1}^*A^*\Pi^*+\tilde P_k C^*\big(C\tilde P_k
  C^*+R\big)^{-1}C\tilde P_k \Pi^*.
\end{equation}
It is worth noting here that $\tilde S_k=\Pi \Cov{x_k,\tilde x_k}$
implying the intuitive fact, $\Pi Q_k=I$ in the case that $\tilde S_k$
is invertible.

Let us conclude by presenting some remarks concerning the derivation
of the reduced-order state estimate and then collecting the relevant
equations to an algorithm.
\begin{remark} \label{rem:m_zero} \textup{The assumption $m=0$ does not
  restrict generality, since we can always always add $\Pi A^km$ to
  $\tilde x_k$ and subtract $CA^km$ from $y_k$ in
  \eqref{eq:decomposition}. However, this is how to make the
  derivation accurate. In practical implementation, it is reasonable
  to just start the state estimate from $\tilde x_0=\Pi m$ and then
  proceed as described.}
\end{remark}
\begin{remark} \label{rem:Q_add}
 \textup{ If $\tilde S_k$ is not invertible, it means that $\mathcal{R}(\tilde
  S_k)$, the range of $\tilde S_k$, does not cover the whole space
  $\mathbb{C}^n$.  The estimate $\tilde x_k$ lies on
  $\mathcal{R}(\tilde S_k)$ almost surely. Thus $Q_k$ is not
  determined uniquely in this case. By imposing additional
  requirements $\Pi Q_k=I$ and
  ${(I-\Pi_s)Q_k}\big|_{\mathcal{R}(\tilde S_k)^{\bot}}=0$ then $Q_k$
  is uniquely determined and it is given by $Q_k=\tilde
  Q_k+\Pi^*(I-\Pi \tilde Q_k)=\Pi^*+(I-\Pi_s)\tilde Q_k$ where $\tilde
  Q_k=\Cov{x_k,\tilde x_k} \tilde S_k^+$.}
\end{remark}

\begin{algorithm} \label{alg:method} \textup{As with the full state Kalman
  filter, the following operator-valued equations can be computed
  beforehand (offline):
\begin{align*}
  S_k &= AS_{k-1}A^*+BUB^*, \\
  \tilde P_k &=AP_{k-1}A^*+BUB^*, \\
  K_k &= \tilde P_kC^*\big(C \tilde P_k C^*\! +R
  \big)^{-1}, \\
V_k &= AQ_{k-1}\tilde
  S_{k-1}Q_{k-1}^*A^*\Pi^*+ K_k \big(C\tilde P_kC^*+R\big) (\Pi K_k)^T, \hspace{-6mm} \\
\tilde S_k&=\Pi V_k, \\
Q_k&=\Pi^*+(I-\Pi_s)\tilde S_k^+V_k, \\
P_k&=S_k-Q_k\tilde S_kQ_k^*.
\end{align*}
The initial values are $S_0$ (given in \eqref{eq:system}),
$P_0=S_0$, $\tilde S_0=0$, and $Q_0=\Pi^*$.
The state estimate is given by
\begin{align*}
\tilde x_0 & =\Pi m, \\
\tilde x_k &= \Pi AQ_{k-1}\tilde x_{k-1}+\Pi K_k(y_k-CAQ_{k-1}\tilde x_{k-1}). \hspace{7mm}
\end{align*}}
\end{algorithm}
\noindent Practical implementation of the proposed method is discussed
in Section~\ref{sec:practice}. An alternative equation for $P_k$ is
derived in the following section.

\section{The error covariance equation} \label{sec:asymptotics}


Motivated by the main theorem of \cite{Bernstein}, we next seek for a
Riccati difference equation satisfied by the error covariance
$P_k$. This equation will be needed later for determining a bound for
the error in the state estimate due to the spatial discretization. To
this end, define the augmented state $\bar x_k:=\bbm{x_k \\ \tilde x_k}$
for which we have dynamic equations
\begin{align*}  \! \bbm{x_k \\ \tilde x_k} \! &= \! \left[
    \!\! \begin{array}{cc} A & 0 \\ \Pi K_k C A &  \Pi( A
      - K_k C A) Q_{k-1} \end{array} \!\!\!  \right] \!\! \bbm{x_{k-1} \\
    \tilde x_{k-1}}+\left[ \!\! \begin{array}{cc} B & 0 \\ \Pi K_k CB
      & \Pi K_k \end{array} \!\!  \right] \!\! \bbm{u_k \\ w_k} \\
&=:\bar A_k \bar x_{k-1}+\bar B_k \bar
  u_k.
\end{align*}
The augmented state covariance satisfies the Lyapunov difference equation
\begin{equation} \label{eq:aug_cov}
\bar S_{k}=\bar A_k
\bar S_{k-1}\bar A_k^*+\bar B_k \bar U \bar B_k^*
\end{equation}
where $\bar U=\bbm{U & 0 \\ 0 & R}$. This covariance can be written
as a block operator by $\bar S_k=\bbm{S_k & V_k \\ V_k^* & \tilde
  S_k}$ where $S_k$ and $\tilde S_k$ are the state and state estimate
covariances, given in \eqref{eq:S_k} and \eqref{eq:tildeS},
respectively. Now it holds that $Q_k=V_k \tilde S_k^{-1}$ (or $Q_k=V_k
\tilde S_k^++\Pi^*(I-\Pi V_k \tilde S_k^+)$ if $\tilde S_k$ is not
invertible) and thus for the reduced-order error covariance defined in
\eqref{eq:err_cov}, it holds that $P_k=S_k-V_k \tilde
S_k^+V_k^*$.  Also, for the prediction error covariance we have
$\tilde P_k =A(S_k-V_k\tilde S_k^+V_k^*)A^*+BUB^*$ by
\eqref{eq:pred_cov}.
Using these notations we get from \eqref{eq:aug_cov}
\begin{align*}
  V_k &=AS_{k-1}A^*C^*K^*\Pi^*+AV_{k-1}\tilde S_{k-1}^+V_{k-1}^*A^*(\Pi-\Pi  K_kC)^*+BUB^*C^*K_k^*\Pi^* \\
  & =\tilde P_kC^*(C\tilde P_kC^*+R)^{-1}C\tilde
  P_k\Pi^*+AV_{k-1}\tilde S_{k-1}^+V_{k-1}^* A^*\Pi^*,
\end{align*}
and similarly $\tilde S_k=\Pi V_k=V_k^*\Pi^*$. Using the state
covariance Lyapunov equation \eqref{eq:S_k} and the equations above
and noting that $V_k\tilde S_k^+V_k^*=Q_kV_k^*=V_kQ_k^*=Q_k\tilde
S_kQ_k^*$, we see that the error covariance $P_k$ satisfies the
\emph{Riccati difference equation} (RDE)
\begin{equation} \label{eq:RDE_err}
\begin{cases}
  \tilde P_k=AP_{k-1}A^*+BUB^*, \\
  P_k=\tilde P_k-\tilde P_k C^*(C\tilde P_k C^*+R)^{-1}C\tilde P_k+ \\
  \  +(I-Q_k\Pi)(AV_{k-1}\tilde S_{k-1}^+V_{k-1}^*A^*\! +\tilde P_k
  C^*(C\tilde P_k C^*+R)^{-1}C\tilde P_k)(I-Q_k\Pi)^*.
\end{cases} \hspace{-6mm}
\end{equation}
This equation is posed in $\mathcal{L}(\sX)$. Note that this is not a
complete set of equations, but the last equation in
Algorithm~\ref{alg:method} can be replaced by the second equation in
\eqref{eq:RDE_err}. Compared to the RDE \eqref{eq:full_RDE} for the
full state Kalman filter, this equation contains the additional load
term in the last line of \eqref{eq:RDE_err}.  In the next section we
find an upper bound for the effect of this additional term to the
solution at the infinite time limit but first we need to go through
some auxiliary results.

\begin{proposition} \label{prop:cond}
  Let $\mathcal{S}_1$ and $\mathcal{S}_2$ be sigma algebras, such that
  $\mathcal{S}_1 \subset \mathcal{S}_2$ and $x$ an integrable random
  variable. Then
  $\Ex{x|\mathcal{S}_1}=\Ex{\Ex{x|\mathcal{S}_2}|\mathcal{S}_1}$.

  If $x$ is quadratically integrable then
\[  
\Cov{\Ex{x|\mathcal{S}_1},\Ex{x|\mathcal{S}_1}} \le
  \Cov{\Ex{x|\mathcal{S}_2},\Ex{x|\mathcal{S}_2}} \le \Cov{x,x}.
\]
\end{proposition}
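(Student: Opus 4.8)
The plan is to treat the two assertions separately: the first is simply the tower property of conditional expectation, and the second reduces, via that tower property, to a single ``Pythagoras'' identity for a centered square-integrable random variable, combined with positivity of covariance operators.

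\noindent\textbf{Tower property.} I would argue from the defining property of the $\sX$-valued conditional expectation: $\Ex{x|\mathcal{S}}$ is, up to a null set, the unique $\mathcal{S}$-measurable integrable random variable $z$ with $\Ex{\mathbf{1}_A z}=\Ex{\mathbf{1}_A x}$ (Bochner integrals) for every $A\in\mathcal{S}$; equivalently $\Ex{\mathbf{1}_A\ip{z,h}_{\sX}}=\Ex{\mathbf{1}_A\ip{x,h}_{\sX}}$ for all $A\in\mathcal{S}$ and $h\in\sX$. Since $\mathcal{S}_1\subset\mathcal{S}_2$, any $A\in\mathcal{S}_1$ also lies in $\mathcal{S}_2$, so $\Ex{\mathbf{1}_A\,\Ex{x|\mathcal{S}_2}}=\Ex{\mathbf{1}_A x}$; moreover $\Ex{\Ex{x|\mathcal{S}_2}\,|\,\mathcal{S}_1}$ is $\mathcal{S}_1$-measurable and satisfies $\Ex{\mathbf{1}_A\,\Ex{\Ex{x|\mathcal{S}_2}\,|\,\mathcal{S}_1}}=\Ex{\mathbf{1}_A\,\Ex{x|\mathcal{S}_2}}=\Ex{\mathbf{1}_A x}$ for $A\in\mathcal{S}_1$. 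Uniqueness then forces $\Ex{\Ex{x|\mathcal{S}_2}\,|\,\mathcal{S}_1}=\Ex{x|\mathcal{S}_1}$.

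\noindent\textbf{Reduction of the covariance inequalities.} A covariance operator depends only on the centered variable, and $\Ex{\Ex{x|\mathcal{S}_i}}=\Ex{x}$, so without loss of generality $\Ex{x}=0$. It then suffices to prove: for every quadratically integrable $\sX$-valued $y$ with $\Ex{y}=0$ and every sigma algebra $\mathcal{S}$,
\[
\Cov{\Ex{y|\mathcal{S}},\Ex{y|\mathcal{S}}}\le\Cov{y,y}.
\]
Indeed, applying this with $y=x$ and $\mathcal{S}=\mathcal{S}_2$ gives the second inequality, and applying it with $y=\Ex{x|\mathcal{S}_2}$ and $\mathcal{S}=\mathcal{S}_1$ --- then using the tower property to replace $\Ex{\Ex{x|\mathcal{S}_2}\,|\,\mathcal{S}_1}$ by $\Ex{x|\mathcal{S}_1}$ --- gives the first. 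Square-integrability of $\Ex{y|\mathcal{S}}$ is automatic, e.g.\ from $\norm{\Ex{y|\mathcal{S}}}_{\sX}\le\Ex{\norm{y}_{\sX}\,|\,\mathcal{S}}$ together with conditional Jensen.

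\noindent\textbf{Pythagoras.} Set $r:=y-\Ex{y|\mathcal{S}}$, so that $\Ex{r|\mathcal{S}}=0$ and hence $\Ex{r}=\Ex{\Ex{y|\mathcal{S}}}=0$. For $h,g\in\sX$ I would condition on $\mathcal{S}$ and pull out the $\mathcal{S}$-measurable factor:
\[
\Ex{\ip{\Ex{y|\mathcal{S}},h}_{\sX}\,\ip{r,g}_{\sX}}=\Ex{\ip{\Ex{y|\mathcal{S}},h}_{\sX}\,\ip{\Ex{r|\mathcal{S}},g}_{\sX}}=0,
\]
which says exactly that the cross-covariance operator $\Cov{\Ex{y|\mathcal{S}},r}$ (and its adjoint $\Cov{r,\Ex{y|\mathcal{S}}}$) vanishes. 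Expanding $\Cov{y,y}=\Cov{\Ex{y|\mathcal{S}}+r,\,\Ex{y|\mathcal{S}}+r}$ by bilinearity of $\Cov{\cdot,\cdot}$ then yields $\Cov{y,y}=\Cov{\Ex{y|\mathcal{S}},\Ex{y|\mathcal{S}}}+\Cov{r,r}$; since $\Cov{r,r}$ is a positive operator --- $\ip{\Cov{r,r}h,h}_{\sX}=\Ex{|\ip{r,h}_{\sX}|^2}\ge0$ --- the asserted inequality follows. The only point needing care is this vanishing of the cross-covariance in the infinite-dimensional, operator-valued setting: it relies on the standard facts that $\ip{\Ex{z|\mathcal{S}},h}_{\sX}=\Ex{\ip{z,h}_{\sX}\,|\,\mathcal{S}}$ almost surely and that a square-integrable $\mathcal{S}$-measurable scalar factor may be taken outside $\Ex{\cdot\,|\,\mathcal{S}}$ (legitimate here because $y$, hence both $\ip{\Ex{y|\mathcal{S}},h}_{\sX}$ and $\ip{r,g}_{\sX}$, is square-integrable, so their product is integrable). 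Everything else is routine bookkeeping with the bilinear definition of $\Cov{\cdot,\cdot}$ and positivity of covariance operators.
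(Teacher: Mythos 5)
Your proof is correct. The paper states Proposition~\ref{prop:cond} without proof, treating it as a standard fact about (Bochner) conditional expectation, and your argument is exactly the standard one: the tower property from the defining relation of $\Ex{\cdot|\mathcal{S}}$, followed by the orthogonal decomposition $y=\Ex{y|\mathcal{S}}+r$ with vanishing cross-covariance, which is precisely the identity the paper records as \eqref{eq:pythagoras} in the Gaussian case. You also correctly flag the one point needing care --- pulling the $\mathcal{S}$-measurable scalar factor out of the conditional expectation, justified by square-integrability --- so nothing is missing.
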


\begin{lemma} \label{lem:M_k} Assume that the state covariance $S_k$
  defined in \eqref{eq:S_k} satisfies $S_k \le S$ for all $k$ for some
  trace class operator $S \in \mathcal{L}^*(\sX)$.  For the
  discretization error term in the RDE~\eqref{eq:RDE_err}, it holds
  that
\begin{align} \nonumber
M_k :=& \, (I-Q_k\Pi)(AV_{k-1}\tilde S_{k-1}^+V_{k-1}^*A^*\! +\tilde P_k
  C^*(C\tilde P_k C^*+R)^{-1}C\tilde P_k)(I-Q_k\Pi)^* \\
\label{eq:def_M}  \le& \, (I-\Pi_s)S(I-\Pi_s)^*=:M.
\end{align}
\end{lemma}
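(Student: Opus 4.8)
\emph{Proof idea.} The plan is to give the discretization term $M_k$ a probabilistic meaning as a genuine conditional error covariance, and then to bound it by two elementary monotonicity facts. Set $\xi_k := \Ex{x_k|\tilde x_{k-1},y_k}$, so that $\tilde x_k = \Pi\xi_k$ by \eqref{eq:structure} and, by \eqref{eq:kf_proj}, $\xi_k = AQ_{k-1}\tilde x_{k-1}+K_k\big(y_k-CAQ_{k-1}\tilde x_{k-1}\big)$. Anchoring $\xi_k$ to $\tilde x_{k-1}$ as in the derivation of \eqref{eq:tildeS} --- the innovation $y_k-CAQ_{k-1}\tilde x_{k-1}\sim N\big(0,C\tilde P_kC^*+R\big)$ is independent of $\tilde x_{k-1}$ by \eqref{eq:y_k_comp} --- the two summands are independent, and a one-line computation gives
\begin{align*}
\Cov{\xi_k,\xi_k} &= AQ_{k-1}\tilde S_{k-1}Q_{k-1}^*A^*+K_k\big(C\tilde P_kC^*+R\big)K_k^* \\
&= AV_{k-1}\tilde S_{k-1}^+V_{k-1}^*A^*+\tilde P_kC^*\big(C\tilde P_kC^*+R\big)^{-1}C\tilde P_k =: N_k,
\end{align*}
which is exactly the operator sandwiched between the factors $(I-Q_k\Pi)$ and $(I-Q_k\Pi)^*$ in the definition \eqref{eq:def_M} of $M_k$.

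Next I would identify $M_k=(I-Q_k\Pi)N_k(I-Q_k\Pi)^*$ with an honest error covariance. Because $\tilde x_k$ is a deterministic function of $(\tilde x_{k-1},y_k)$, Proposition~\ref{prop:cond} gives $\Ex{x_k|\tilde x_k}=\Ex{\Ex{x_k|\tilde x_{k-1},y_k}\,|\,\tilde x_k}=\Ex{\xi_k|\tilde x_k}$, while \eqref{eq:Q_def} (together with Remark~\ref{rem:Q_add} in the degenerate case) says $\Ex{x_k|\tilde x_k}=Q_k\tilde x_k$. Hence
\[
(I-Q_k\Pi)\xi_k=\xi_k-Q_k\Pi\xi_k=\xi_k-Q_k\tilde x_k=\xi_k-\Ex{\xi_k|\tilde x_k}=:\eta_k,
\]
so that $M_k=(I-Q_k\Pi)\Cov{\xi_k,\xi_k}(I-Q_k\Pi)^*=\Cov{\eta_k,\eta_k}$.

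The last step combines a localization with a monotonicity argument. Since $\Pi Q_k=I$ (observed in Section~\ref{sec:estimate}), we have $\Pi(I-Q_k\Pi)=0$, hence $\Pi_s(I-Q_k\Pi)=\Pi^*\Pi(I-Q_k\Pi)=0$ and therefore $(I-\Pi_s)(I-Q_k\Pi)=I-Q_k\Pi$; substituting this on both sides gives $M_k=(I-\Pi_s)M_k(I-\Pi_s)^*$. On the other hand, by the Pythagoras identity \eqref{eq:pythagoras} and the conditioning monotonicity of Proposition~\ref{prop:cond},
\[
M_k=\Cov{\eta_k,\eta_k}\le\Cov{\xi_k,\xi_k}\le\Cov{x_k,x_k}=S_k\le S,
\]
the middle inequality because $\xi_k=\Ex{x_k|\tilde x_{k-1},y_k}$ and the last by hypothesis. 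Conjugating $M_k\le S$ by the bounded self-adjoint operator $I-\Pi_s$ and using $M_k=(I-\Pi_s)M_k(I-\Pi_s)^*$ yields $M_k\le(I-\Pi_s)S(I-\Pi_s)^*=M$, which is \eqref{eq:def_M}. I expect the only genuine subtlety to lie in the degenerate case where $\tilde S_k$ is singular: one must check, using $\Pi\Pi^*=I$ and the identity $\Pi\Ex{x_k|\tilde x_k}=\tilde x_k$ (almost surely, again from Proposition~\ref{prop:cond}), that the $Q_k$ of Remark~\ref{rem:Q_add} still satisfies $\Pi Q_k=I$ and $Q_k\tilde x_k=\Ex{x_k|\tilde x_k}$ almost surely, so that the identification $M_k=\Cov{\eta_k,\eta_k}$ and the localization step carry over unchanged.
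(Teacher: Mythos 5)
Your proof is correct, and its skeleton --- computing $\Cov{\xi_k,\xi_k}=N_k$ by anchoring to $\tilde x_{k-1}$, and identifying $(I-Q_k\Pi)\xi_k=\xi_k-\Ex{\xi_k|\tilde x_k}$ via the tower property $Q_k\tilde x_k=\Ex{x_k|\tilde x_k}=\Ex{\xi_k|\tilde x_k}$ --- is exactly the paper's. You diverge only in the concluding step. The paper reads off from that identification that $Q_k$ minimizes the quadratic form $\ip{e,(I-Z\Pi)\Cov{\xi_k,\xi_k}(I-Z\Pi)^*e}_{\sX}$ over all $Z\in\mathcal{L}(\mathbb{C}^n,\sX)$, substitutes the competitor $Z=\Pi^*$ to obtain $M_k\le(I-\Pi_s)\Cov{\xi_k,\xi_k}(I-\Pi_s)^*$ directly, and only then applies Proposition~\ref{prop:cond} inside the sandwich. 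You instead bound $M_k\le\Cov{\xi_k,\xi_k}\le S_k\le S$ first, by Pythagoras and Proposition~\ref{prop:cond}, and recover the outer factors $(I-\Pi_s)$ afterwards from the algebraic identity $\Pi Q_k=I$, which yields $M_k=(I-\Pi_s)M_k(I-\Pi_s)^*$ and allows you to conjugate. The two endings are equivalent in substance --- both ultimately rest on the same probabilistic meaning of $Q_k\tilde x_k$ --- but yours trades the variational argument for the explicit identity $\Pi Q_k=I$, which, as you correctly flag, holds automatically when $\tilde S_k$ is invertible (since $\tilde S_k=\Pi V_k$) and is imposed as part of the definition of $Q_k$ in Remark~\ref{rem:Q_add} otherwise, so the degenerate case is indeed covered.
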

\begin{proof}
  Note that $V_{k-1}\tilde S_{k-1}^+V_{k-1}^*=Q_{k-1}\tilde
  S_{k-1}Q_{k-1}^*$. Then by \eqref{eq:kf_proj} and
  \eqref{eq:y_k_comp} it can be seen that
\begin{equation} \nonumber 
M_k=\Cov{(I-Q_k\Pi)\Ex{x_k|\tilde x_{k-1},y_k},(I-Q_k\Pi)\Ex{x_k|\tilde x_{k-1},y_k}}.
\end{equation}
It holds that
\begin{equation} \nonumber 
Q_k\Pi\Ex{x_k|\tilde x_{k-1},y_k}=Q_k\tilde x_k=\Ex{x_k|\tilde
   x_k}=\Ex{\Ex{x_k|\tilde x_{k-1},y_k}|\tilde x_k} 
\end{equation}
where the first equality follows by \eqref{eq:structure}, the second
by the definition of $Q_k$, \eqref{eq:Q_def}, and the third by
Proposition~\ref{prop:cond} and $\mathcal{S}(\tilde x_k) \subset
\mathcal{S}(\tilde x_{k-1},y_k)$ which, in turn, can be seen from
\eqref{eq:structure}.

Thus $Q_k$ minimizes
\begin{align*}
&\Ex{\ip{e,\Ex{x_k|\tilde x_{k-1},y_k}-Z\tilde x_k}_{\sX}^2}=\Ex{\ip{e,(I-Z\Pi)\Ex{x_k|\tilde x_{k-1},y_k}}_{\sX}^2} \\
&=\ip{e,(I-Z\Pi)\Cov{\Ex{x_k|\tilde x_{k-1},y_k},\Ex{x_k|\tilde x_{k-1},y_k}}(I-Z\Pi)^*e}_{\sX}
\end{align*}
over $Z\in
\mathcal{L}(\mathbb{C}^n,\sX)$ for all $e \in \sX$.
Since $\Pi_s=\Pi^*\Pi$, it holds that 
\begin{align*}
  M_k &\le (I-\Pi_s)\Cov{\Ex{x_k|\tilde x_{k-1},y_k},\Ex{x_k|\tilde
      x_{k-1},y_k}}(I-\Pi_s)^* \\
  &\le (I-\Pi_s)\Cov{x_k,x_k}(I-\Pi_s)^* \le M
\end{align*}
where the middle inequality holds by Proposition~\ref{prop:cond}.
\end{proof}
\begin{lemma} \label{lem:mono}
Let $P_k^{(j)}$ for $j=1,2$, be the solutions of the RDEs
\begin{equation} \label{eq:RDE}
\begin{cases}
  \tilde P_k^{(j)}=AP_{k-1}^{(j)}A^*+W_k^{(j)}, \\
  P_k^{(j)}=\tilde P_k^{(j)}-\tilde P_k^{(j)} C^*(C\tilde
  P_k^{(j)}C^*+R)^{-1}C\tilde P_k^{(j)}
\end{cases}
\end{equation}
where $P_0^{(2)} \ge P_0^{(1)} \ge 0$ and $W_k^{(2)} \ge W_k^{(1)} \ge
0$. Then $P_k^{(2)} \ge P_k^{(1)}$ for all $k \ge 0$.
\end{lemma}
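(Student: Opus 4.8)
The plan is to prove this by induction on $k$, exploiting the fact that the map sending a prediction error covariance to the next estimation error covariance --- i.e. the map $\tilde P \mapsto \tilde P - \tilde P C^*(C\tilde P C^* + R)^{-1} C\tilde P$ --- is monotone in the L\"owner order, and then composing this with the obviously monotone affine map $P \mapsto APA^* + W_k^{(j)}$. The base case $k=0$ is exactly the hypothesis $P_0^{(2)} \ge P_0^{(1)}$.

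For the inductive step, assume $P_{k-1}^{(2)} \ge P_{k-1}^{(1)}$. Since $A \in \mathcal{L}(\sX)$ and conjugation by a fixed bounded operator preserves positivity, $A P_{k-1}^{(2)} A^* \ge A P_{k-1}^{(1)} A^*$; adding $W_k^{(2)} \ge W_k^{(1)}$ gives $\tilde P_k^{(2)} \ge \tilde P_k^{(1)} \ge 0$. It then remains to show that the ``measurement update'' map $g_R(\tilde P) := \tilde P - \tilde P C^*(C\tilde P C^* + R)^{-1} C\tilde P$ is monotone: $\tilde P_k^{(2)} \ge \tilde P_k^{(1)} \ge 0$ should imply $g_R(\tilde P_k^{(2)}) \ge g_R(\tilde P_k^{(1)})$. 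The cleanest way I would do this is to use the identity (valid since $R > 0$, so $C\tilde P C^* + R$ is invertible)
\[
g_R(\tilde P) = \tilde P\big(I + C^* R^{-1} C \tilde P\big)^{-1} = \big(I + \tilde P C^* R^{-1} C\big)^{-1}\tilde P = \left(\tilde P^+ + C^* R^{-1} C\right)^{-1}
\]
where the last form requires care when $\tilde P$ is not invertible, so in the infinite-dimensional setting I would instead argue via a perturbation/limiting argument or directly: write $g_R(\tilde P) = \tilde P^{1/2}\big(I + \tilde P^{1/2} C^* R^{-1} C \tilde P^{1/2}\big)^{-1}\tilde P^{1/2}$, which is manifestly self-adjoint and nonnegative, and whose monotonicity in $\tilde P$ can be extracted since $Z \mapsto -(I+Z)^{-1}$ is operator-monotone on nonnegative operators. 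Alternatively, and perhaps more transparently for the reader, $g_R(\tilde P)$ is the Schur complement form of the error covariance, and one can invoke the probabilistic interpretation from \eqref{eq:pythagoras}: $g_R(\tilde P)$ is the covariance of $x - \Ex{x \mid Cx + w}$ where $\Cov{x,x} = \tilde P$ and $\Cov{w,w} = R$, which is $\le \tilde P$ and depends monotonically on $\tilde P$ by Proposition~\ref{prop:cond} applied after enlarging the probability space.

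Combining the two halves: $\tilde P_k^{(2)} \ge \tilde P_k^{(1)}$ yields $P_k^{(2)} = g_R(\tilde P_k^{(2)}) \ge g_R(\tilde P_k^{(1)}) = P_k^{(1)}$, which closes the induction. The main obstacle I anticipate is the rigorous justification of monotonicity of $g_R$ in the infinite-dimensional setting when $\tilde P$ is not boundedly invertible --- the finite-dimensional matrix identities have to be replaced either by the symmetric square-root form above together with operator-monotonicity of $Z \mapsto -(I+Z)^{-1}$, or by the conditional-expectation argument. Everything else (preservation of order under $\ast$-conjugation and under addition of a larger nonnegative term) is routine. I would present the conditional-expectation route as the primary argument since Proposition~\ref{prop:cond} is already available and it avoids square roots of unbounded-inverse operators altogether; one just needs to note that for any $\tilde P_k^{(2)} \ge \tilde P_k^{(1)}$ one can realize jointly Gaussian $(x^{(1)}, x^{(2)})$ with the prescribed marginal covariances and $x^{(2)} - x^{(1)}$ independent of $x^{(1)}$, feed both through the same channel $x \mapsto Cx + w$, and compare.
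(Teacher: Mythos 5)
Your induction is correct and establishes the lemma, but it takes a genuinely different route from the paper. You reduce everything to the L\"owner-monotonicity of the measurement-update map $g_R(\tilde P)=\tilde P-\tilde P C^*(C\tilde PC^*+R)^{-1}C\tilde P$, whereas the paper (following de Souza) embeds the comparison into $\mathcal{L}^*(\sX^3)$: it puts $AP_0^{(j)}A^*$, $W_1^{(1)}$ and the excess $W_1^{(2)}-W_1^{(1)}$ into separate diagonal blocks, introduces the mixed quantity $P_B^{(\times)}$ (system 2's covariance updated with system 1's innovation inverse), and sandwiches $P_B^{(1)}\le P_B^{(\times)}\le P_B^{(2)}$ before compressing back with $[I\ I\ I]$. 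The two arguments are cousins --- $P_B^{(\times)}$ plays the role of your ``evaluate one system's gain on the other's covariance'' step --- but yours needs no enlarged state space and isolates the monotonicity of the Riccati map explicitly, which is arguably more transparent. One caution on your justifications of that monotonicity: the symmetric form $g_R(\tilde P)=\tilde P^{1/2}(I+\tilde P^{1/2}C^*R^{-1}C\tilde P^{1/2})^{-1}\tilde P^{1/2}$ does \emph{not} give monotonicity directly from operator-monotonicity of $Z\mapsto -(I+Z)^{-1}$, because the outer factors $\tilde P^{1/2}$ vary as well; that variant needs more work. Your designated primary route is sound, and is most cleanly written without any probability at all as the completion-of-squares identity $(I-KC)\tilde P(I-KC)^*+KRK^*=g_R(\tilde P)+(K-K_{\tilde P})(C\tilde PC^*+R)(K-K_{\tilde P})^*$ with $K_{\tilde P}=\tilde PC^*(C\tilde PC^*+R)^{-1}$: thus $g_R(\tilde P)$ is the attained L\"owner minimum over gains $K$ of an expression affine and nondecreasing in $\tilde P$, whence $g_R(\tilde P_k^{(1)})\le (I-K_{2}C)\tilde P_k^{(1)}(I-K_{2}C)^*+K_{2}RK_{2}^*\le g_R(\tilde P_k^{(2)})$ with $K_2=K_{\tilde P_k^{(2)}}$. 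Since only the positive-definite $m\times m$ matrix $C\tilde PC^*+R$ is ever inverted, this is pure bounded-operator algebra valid in the infinite-dimensional setting, requiring no invertibility of $\tilde P$, no square roots, and no limiting argument; it is also exactly what your Gaussian-channel comparison computes.
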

\noindent This follows from \cite[Lemma~3.1]{deSouza_mono} by de Souza
in the finite dimensional setting. The proof is just algebraic
manipulation and it holds also in the infinite dimensional setting (if
the output is finite dimensional). However, we shall present a 
straightforward proof.
\begin{proof}
  We show $P_1^{(2)} \ge P_1^{(1)}$. For larger $k$ the result follows by
  induction. Define the block diagonal covariances in $\mathcal{L}^*(\sX^3)$
\[
\tilde P_B^{(1)}=\sbm{AP_0^{(1)}A^* & & \\ & \hspace{-1mm} W_1^{(1)} &
  \\ & & \hspace{-0mm} 0} \qquad \textrm{and} \qquad \tilde
P_B^{(2)}=\sbm{AP_0^{(2)}A^* & & \\ & \hspace{-1mm} W_1^{(1)} & \\ & &
  \hspace{-1mm} W_1^{(2)}\!-W_1^{(1)}}
\]
and $C_B:=[C \ C \ C]$. Then define
\begin{align*}
  P_B^{(j)}&=\tilde P_B^{(j)}-\tilde P_B^{(j)}C_B^*(C_B\tilde P_B^{(j)}C_B^*+R)^{-1}C_B\tilde P_B^{(j)} \qquad \textrm{for } j=1,2 \\
  P_B^{(\times)}&=\tilde P_B^{(2)}-\tilde P_B^{(2)}C_B^*(C_B\tilde
  P_B^{(1)}C_B^*+R)^{-1}C_B\tilde P_B^{(2)}.
\end{align*}
Now $\tilde P_B^{(2)} \ge \tilde P_B^{(1)}$ implies $P_B^{(2)} \ge
P_B^{(\times)}$. Then $P_B^{(1)}=\sbm{I & & \\ & I & \\ & &
  0}P_B^{(\times)}\sbm{I & & \\ & I & \\ & & 0}$ and so $P_B^{(\times)} \ge
P_B^{(1)}$. Now $P_1^{(j)}=[I \ I \ I]P_B^{(j)}\sbm{I \\ I \\ I}$ and
so $P_1^{(2)} \ge P_1^{(1)}$.
\end{proof}

The following lemma is due to Hager and Horowitz,
\cite{Hager_Horowitz}:
\begin{lemma} \label{lem:RDE_conv} Assume that $S_k \le S$ for all $k$
  for some trace class operator $S \in \mathcal{L}^*(\sX)$ where $S_k$
  is defined in \eqref{eq:S_k}. Let $P_k^{(F)}$ be the solution of
  \eqref{eq:full_RDE} and $P_k^{(b)}$ be the solution of
  \eqref{eq:RDE} with $W_k^{(b)}=W^{(b)}=BUB^*+AMA^*$ where $M$ is
  defined in \eqref{eq:def_M}.  Assuming $P_0^{(b)}=P_0^{(F)}=0$, then
  $P_k^{(b/F)} \to P^{(b/F)}$ strongly as $k \to \infty$.  Also, the
  limit operators $P^{(b/F)} \ge 0$ are the unique nonnegative
  solutions of the discrete time algebraic Riccati equation (DARE)
\begin{equation} \label{eq:DARE}
\begin{cases}
\tilde P^{(b/F)}=AP^{(b/F)}A^*+W^{(b/F)}, \\
  P^{(b/F)}=\tilde P^{(b/F)}-\tilde P C^*(C\tilde
  P^{(b/F)}C^*+R)^{-1}C\tilde P^{(b/F)}
\end{cases}
\end{equation}
where $W^{(F)}=BUB^*$.

If $\sigma(A- K^{(F)} CA) \subset B(0,\rho)$ with $\rho < 1$ where
$K^{(F)}$ is the limit of the full state Kalman gain, that is
\begin{equation} \label{eq:K_inf}
K^{(F)}=\tilde P^{(F)}C^*(C\tilde P^{(F)}C^*+R)^{-1}, 
\end{equation}
then
$P_k^{(F)} \to P^{(F)}$ strongly, starting from any $P_0^{(F)} \ge 0$.
\end{lemma}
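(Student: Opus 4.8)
The plan is to split the statement into four pieces: strong convergence of the zero-initialized iterates, identification of the limit as a solution of the DARE, uniqueness of the nonnegative solution, and convergence from an arbitrary initial condition under the spectral gap. Throughout, write $\mathcal{R}_W(P):=\tilde P-\tilde PC^*(C\tilde PC^*+R)^{-1}C\tilde P$ with $\tilde P=APA^*+W$, so that $P_k^{(b/F)}=\mathcal{R}_{W^{(b/F)}}(P_{k-1}^{(b/F)})$. I would first record the elementary facts that $\mathcal{R}_W(P)\ge0$ for $P,W\ge0$ (Schur complement of $C\tilde PC^*+R>0$), that $AMA^*$ is a bounded nonnegative operator since $0\le M\le S$ with $S$ trace class, that $\mathcal{R}_W$ is concave on the cone of nonnegative operators (being a pointwise minimum over gains $K$ of maps affine in $P$), and that $\mathcal{R}_W$ is monotone in $P$ --- this last being exactly Lemma~\ref{lem:mono} with $W_k^{(1)}=W_k^{(2)}=W$. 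Since $\mathcal{R}_{W^{(b/F)}}(0)\ge0=P_0^{(b/F)}$, monotonicity and induction then give that $(P_k^{(b/F)})_k$ is nondecreasing in $k$.

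The next point is a uniform upper bound. In the $(F)$ case it is immediate: with $P_0^{(F)}=0$ one has $P_k^{(F)}\le\tilde P_k^{(F)}\le\sum_{j=0}^{k-1}A^jBUB^*(A^*)^j\le S_k\le S$. The $(b)$ case is the delicate point. Using $\mathcal{R}_{BUB^*}(Y+M)\le\mathcal{R}_{BUB^*}(Y)+AMA^*$ (equivalent to monotonicity of $Z\mapsto ZC^*(CZC^*+R)^{-1}CZ$) one dominates $P_k^{(b)}=\mathcal{R}_{BUB^*}(P_{k-1}^{(b)}+M)$ by the ``load-after'' iterate $\widehat P_k=\mathcal{R}_{BUB^*}(\widehat P_{k-1})+AMA^*$, whose excess over the full-state error covariance, $\widehat P_k-P_k^{(F)}\ge0$, is by concavity of $\mathcal{R}_{BUB^*}$ dominated by the solution $\Sigma_k$ of the Lyapunov recursion $\Sigma_k=(A-K_k^{(F)}CA)\Sigma_{k-1}(A-K_k^{(F)}CA)^*+AMA^*$, $\Sigma_0=0$. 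Boundedness of $\Sigma_k$ --- i.e. that the full-state closed loop does not let the fixed load $AMA^*$ accumulate --- is precisely the stabilizability/detectability-type property that the hypothesis $S_k\le S$ (trace class) together with the finite-dimensional output is designed to furnish, and is the content of \cite{Hager_Horowitz}; I would invoke that reference here rather than reprove it. Granted a uniform bound, a nondecreasing sequence of nonnegative self-adjoint operators bounded above converges strongly: $\langle P_k^{(b/F)}x,x\rangle$ increases and is bounded for each $x$, polarization gives convergence of $\langle P_k^{(b/F)}x,y\rangle$, and uniform boundedness with monotonicity makes $P_k^{(b/F)}x$ Cauchy. Denote the strong limits $P^{(b/F)}\ge0$. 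To see they solve the DARE~\eqref{eq:DARE}, use that the output is finite dimensional: $P_{k-1}\to P$ strongly forces $\tilde P_k\to APA^*+W$ strongly, hence $C\tilde P_kC^*\to C\tilde PC^*$ in matrix norm, hence $(C\tilde P_kC^*+R)^{-1}\to(C\tilde PC^*+R)^{-1}$ since $R>0$ keeps every term invertible, so $\mathcal{R}_W$ is continuous along the iterates and one passes to the limit.

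For uniqueness I would first observe that $P^{(b/F)}$ is the minimal nonnegative solution: if $\bar P\ge0$ solves the DARE, running the RDE from $\bar P$ leaves it fixed, so Lemma~\ref{lem:mono} applied to the initial data $\bar P\ge0=P_0^{(b/F)}$ gives $\bar P\ge P_k^{(b/F)}\to P^{(b/F)}$. The reverse inequality $\bar P\le P^{(b/F)}$ is the substantive half: writing $\Delta=\bar P-P^{(b/F)}\ge0$ and subtracting the two copies of the DARE, the standard Riccati identity yields $\Delta=\mathcal{A}\,\Delta\,\mathcal{A}^*-(\text{a nonnegative term})$ for a closed-loop operator $\mathcal{A}$ built from $\bar P$ and $P^{(b/F)}$, which forces $\Delta=0$ once one knows $\mathcal{A}$ is (strongly) power-stable and the associated Lyapunov equation has only the trivial nonnegative solution --- again the detectability argument of \cite{Hager_Horowitz}, which I would cite rather than reconstruct.

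Finally, convergence from an arbitrary $P_0^{(F)}\ge0$ under $\sigma(A-K^{(F)}CA)\subset B(0,\rho)$, $\rho<1$: the lower bound is Lemma~\ref{lem:mono} ($P_0^{(F)}\ge0$ gives $P_k^{(F)}\ge$ the zero-initialized iterate, which tends to $P^{(F)}$), and for the upper bound the difference of two Riccati solutions factors through products of the time-varying closed loops $A-K_k^{(F)}CA$; since $K_k^{(F)}\to K^{(F)}$ while $(A-K^{(F)}CA)^j$ decays geometrically in operator norm (spectral radius $<1$), these products tend to zero, whence $P_k^{(F)}-P^{(F)}\to0$ strongly regardless of $P_0^{(F)}$. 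The step I expect to be the real obstacle is the uniform upper bound for $P_k^{(b)}$ and, relatedly, the uniqueness claim: both rest on extracting the needed stabilizability and detectability from the single hypothesis $S_k\le S$ together with the finite-dimensionality of input and output --- exactly the analysis carried out in \cite{Hager_Horowitz} --- whereas the monotone-convergence mechanics and the passage to the limit are routine once the output space is finite dimensional.
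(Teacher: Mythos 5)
Your route is in substance the paper's own: the paper's entire proof of this lemma is the two-line remark that the first claim follows from \cite[Theorem~1]{Hager_Horowitz} ``because $P_k^{(j)}\le S$'' and the second from \cite[Theorem~3]{Hager_Horowitz}. Your scaffolding --- monotone nondecreasing iterates from $P_0=0$ via Lemma~\ref{lem:mono}, strong convergence of a bounded nondecreasing self-adjoint sequence, passage to the limit in the DARE using the finite-dimensional output and $R>0$, and delegation of the stabilizability/detectability content (uniform bound in the $(b)$ case, uniqueness, arbitrary initial data) to Hager--Horowitz --- is a correct and more explicit expansion of that citation, and you are right to single out the uniform bound for $P_k^{(b)}$ as the genuinely delicate point, which the paper itself glosses over.

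However, the one elementary step you insert there is false. The inequality $\mathcal{R}_{BUB^*}(Y+M)\le\mathcal{R}_{BUB^*}(Y)+AMA^*$ is, as you say, equivalent to monotonicity of $g(Z)=ZC^*(CZC^*+R)^{-1}CZ$ between $AYA^*+BUB^*$ and $AYA^*+BUB^*+AMA^*$; but $g$ is \emph{not} Loewner-monotone. (The posterior covariance $Z\mapsto Z-g(Z)=(Z^{-1}+C^*R^{-1}C)^{-1}$ is operator-monotone; the ``information gain'' $g$ is not.) Concretely, with $\sX=\mathbb{C}^2$, $C=I$, $R=I$, the derivative of $g$ at $Z$ in direction $D\ge0$ is $D-TDT$ with $T=(Z+I)^{-1}$; taking $T=\sbm{.5&.4\\.4&.5}$ (so $Z=T^{-1}-I\ge0$) and $D=\sbm{1&0\\0&0}$ gives $\ip{(D-TDT)e_2,e_2}=-0.16<0$. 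The correct one-step domination, obtained from $\mathcal{R}_W(P)=\min_K\left[(I-KC)(APA^*+W)(I-KC)^*+KRK^*\right]$ by inserting the gain $K_Y$ optimal for $Y$, is $\mathcal{R}_{BUB^*}(Y+M)\le\mathcal{R}_{BUB^*}(Y)+(A-K_YCA)M(A-K_YCA)^*$, which necessarily carries the closed-loop factor; so the boundedness of $P_k^{(b)}$ really does reduce to power-boundedness of the time-varying closed loop, i.e.\ exactly the Hager--Horowitz machinery you cite rather than to any hypothesis-free comparison. Since you delegate that step to the reference in the end, the proof survives, but the purported shortcut and its ``equivalent'' monotonicity claim should be removed.
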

\noindent The first part follows from \cite[Theorem~1]{Hager_Horowitz}
because $P_k^{(j)} \le S$, and the second part from
\cite[Theorem~3]{Hager_Horowitz}.

Even the weak convergence would suffice for the dominated convergence
of trace class operators:
\begin{lemma} \label{lem:tr_conv} If $P$, $S$, and $P_k$ for
  $k=0,1,...$ are trace class operators in $\mathcal{L}^*(\sX)$, $P_k
  \le S$ for all $k$, and $P_k \overset{\textrm{w}}{\longrightarrow}
  P$, then $\tr(P_k) \to \tr(P)$.
\end{lemma}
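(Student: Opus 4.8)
The plan is to compute the traces against a fixed orthonormal basis of $\sX$ and reduce the assertion to the dominated convergence theorem for series on $\mathbb{N}$ with the counting measure. Throughout I use the two-sided bound $0 \le P_k \le S$; the lower bound $P_k \ge 0$ is what actually makes the argument work (it holds in our application, where the $P_k$ are covariance operators), and it should be read as part of the hypothesis $0 \le P_k \le S$.

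First I would fix an orthonormal basis $\{e_n\}_{n \ge 1}$ of the separable Hilbert space $\sX$ and recall that for any nonnegative trace class operator $T \in \mathcal{L}^*(\sX)$ one has $\tr(T) = \sum_{n \ge 1} \ip{T e_n, e_n}_{\sX}$, independently of the basis. Applying this to $P_k$, $P$ and $S$, set
\[
a_{k,n} := \ip{P_k e_n, e_n}_{\sX}, \qquad a_n := \ip{P e_n, e_n}_{\sX}, \qquad s_n := \ip{S e_n, e_n}_{\sX} .
\]
From $0 \le P_k \le S$ we obtain $0 \le a_{k,n} \le s_n$ for all $k,n$, and $\sum_{n \ge 1} s_n = \tr(S) < \infty$. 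From the weak convergence $P_k \overset{\textrm{w}}{\longrightarrow} P$, taking $x = y = e_n$ in the definition of weak operator convergence, we get $a_{k,n} \to a_n$ as $k \to \infty$ for each fixed $n$.

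Now I would invoke the dominated convergence theorem: the sequence of functions $n \mapsto a_{k,n}$ converges pointwise on $\mathbb{N}$ to $n \mapsto a_n$ as $k \to \infty$ and is dominated by the summable function $n \mapsto s_n$, so $\tr(P_k) = \sum_{n \ge 1} a_{k,n} \to \sum_{n \ge 1} a_n = \tr(P)$, which is the claim. As a byproduct, $0 \le P \le S$ follows from the weak convergence, so $P$ is automatically trace class, consistently with the stated hypotheses.

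There is no genuine analytic obstacle here; the only point that requires care is that the interchange of $\lim_k$ and $\sum_n$ really does need the lower bound $P_k \ge 0$ in addition to $P_k \le S$. With only the upper bound the statement is false: take $\sX = \ell^2$, $S = \textrm{diag}(n^{-2})$ and $P_k = S - e_k e_k^*$, which converges weakly to $S$ while $\tr(P_k) = \tr(S) - 1$ for every $k$. In our setting this lower bound is automatic, so the proof reduces to the routine dominated-convergence step above.
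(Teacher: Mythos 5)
Your proof is correct and follows exactly the route the paper sketches: fix an orthonormal basis, note that weak convergence gives $\ip{e_n,P_ke_n}_{\sX}\to\ip{e_n,Pe_n}_{\sX}$, and conclude by dominated convergence with the summable dominating sequence $\ip{e_n,Se_n}_{\sX}$. Your observation that the two-sided bound $0\le P_k\le S$ is genuinely needed (with the counterexample $P_k=S-e_ke_k^*$) is a valid and worthwhile refinement of the lemma as literally stated; the lower bound is indeed automatic in the paper's application, where the $P_k$ are covariance operators.
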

\noindent The proof is rather straightforward after noting that
$\ip{e_j,P_ke_j}_{\sX} \to \ip{e_j,Pe_j}_{\sX}$ as $k \to \infty$, for
all $j \in \mathbb{N}$ where $\{e_j\}_{j \in \mathbb{N}}$ is an
orthonormal basis for $\sX$.


\section{Error analysis} \label{sec:error}

Next we use sensitivity analysis for DAREs and the results of the
preceding section to show a bound for the discrepancy
$\Ex{\norm{Q_k\tilde x_k- \hat{x}_k}_{\sX}^2}$ of the full and
reduced-order state estimates, defined in \eqref{eq:KF} and
\eqref{eq:structure}, respectively. The results of this section are
based on bounding the effect of the perturbation $M_k$ in
\eqref{eq:def_M} caused by the spatial discretization. Such bound is
possible if we have additional information about the smoothness of the
state $x_k$. That is, it is assumed that $x_k$ lies in a subspace
$\sX_1$ of $\sX$ --- which is a Hilbert space itself --- and that the
projection $\Pi_s$ approximates well the vectors in that subspace,
meaning that the norm $\norm{I-\Pi_s}_{\mathcal{L}(\sX_1,\sX)}$
becomes small as the spatial discretization is refined.

We show two theorems --- first (Thm.~\ref{thm:error}) is
an \emph{a priori} type estimate on the convergence rate of
$\Ex{\norm{Q_k\tilde x_k- \hat{x}_k}_{\sX}^2}$, and the second
(Thm.~\ref{thm:posteriori}) is an \emph{a posteriori} estimate of the
error $\Ex{\norm{Q_k\tilde x_k- \hat{x}_k}_{\sX}^2}$.
\begin{theorem} \label{thm:error} 
  Consider the system \eqref{eq:system} and the reduced order state
  estimator $Q_k\tilde x_k$ derived in Sections~\ref{sec:estimate} and
  \ref{sec:asymptotics}. Make the following assumptions:
\begin{itemize}

\item[(i)] $x_k \in \sX_1$ a.s. for all $k$ where $\sX_1$ is a Hilbert
  space that is a vector subspace of $\sX$ and
  $\sup_k\Ex{\norm{x_k}_{\sX_1}^2} < \infty$.



\item[(ii)] The state covariance $S_k$ defined in \eqref{eq:S_k}
  converges to the solution of the Lyapunov equation $S=ASA^*+BUB^*$,
  that is, $S=\sum_{j=0}^{\infty}A^jBUB^*(A^*)^j$ and $S_k \le S$ for
  all $k \ge 0$. Use this $S$ in the definition of $M$ in
  \eqref{eq:def_M}.

  
\item[(iii)] The converged full state Kalman filter is exponentially
  stable, meaning $\sigma(A-K^{(F)}CA) \subset B(0,\rho)$ for some
  $\rho < 1$ where $K^{(F)}$ is the Kalman gain of the converged full
  state Kalman filter, introduced in \eqref{eq:K_inf}.

\end{itemize}

\noindent If $\norm{I-\Pi_s}_{\mathcal{L}(\sX_1,\sX)}$ is small enough, it
holds that
\begin{equation} \nonumber \limsup_{k \to \infty}\Ex{\norm{Q_k\tilde x_k-
      \hat{x}_k}_\sX^2} \le C
  \norm{I-\Pi_s}_{\mathcal{L}(\sX_1,\sX)}^2 + \mathcal{O} \! \left(
    \norm{I-\Pi_s}_{\mathcal{L}(\sX_1,\sX)}^4 \right)
\end{equation}
where $\displaystyle C=\left(\! 1+L\norm{A-
    K^{(F)}CA}_{\mathcal{L}(\sX)}^2\right)\sup_k
\Ex{\norm{x_k}_{\sX_1}^2}$
and $L$ is defined in Lemma~\ref{lem:L_properties}.


\end{theorem}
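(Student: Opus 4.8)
The plan is to track the discrepancy $e_k := Q_k\tilde x_k - \hat x_k$ through one step of the two filters and estimate the resulting covariance operator, then pass to the limit $k\to\infty$ using the convergence results of Lemma~\ref{lem:RDE_conv}. First I would note that both $Q_k\tilde x_k$ and $\hat x_k$ are conditional expectations of $x_k$ with respect to nested pieces of information (the reduced-order one over $\mathcal{S}(\tilde x_{k-1},y_k)$, the full one over $\mathcal{S}(y_1,\dots,y_k)$), so by the tower property and Proposition~\ref{prop:cond} the error splits in a Pythagorean way. Concretely, I would write $Q_k\tilde x_k - \hat x_k = (Q_k\tilde x_k - x_k) - (\hat x_k - x_k)$, and since $\hat x_k-x_k$ is orthogonal to everything $\mathcal{S}(y_1,\dots,y_k)$-measurable, while $\mathcal{S}(\tilde x_{k-1},y_k)$ need not be contained in that $\sigma$-algebra, the cleanest route is to compare covariance \emph{operators}: $\Ex{\norm{e_k}_\sX^2} = \tr\big(\Cov{e_k,e_k}\big)$, and $\Cov{e_k,e_k}$ should come out as (reduced error covariance) minus (full error covariance) $= P_k - \Pi^* P_k^{(F)}\Pi$-type difference, up to the fact that $Q_k\tilde x_k$ lives in $\Pi_s\sX$. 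Actually the key identity I expect is $\Ex{\norm{Q_k\tilde x_k-\hat x_k}_\sX^2} = \tr(P_k) - \tr(P_k^{(F)})$ plus a term measuring how well the full Kalman estimate is captured by $\Pi_s$ — this is where assumption (i) enters.

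Second, I would set up the comparison of Riccati equations. The reduced error covariance $P_k$ solves the loaded RDE~\eqref{eq:RDE_err} with load term $M_k$, which by Lemma~\ref{lem:M_k} satisfies $M_k \le M = (I-\Pi_s)S(I-\Pi_s)^*$. By Lemma~\ref{lem:mono} (monotonicity of the RDE in both the initial condition and the load), $P_k \le P_k^{(b)}$ where $P_k^{(b)}$ solves the RDE with the constant load $W^{(b)} = BUB^* + AMA^*$; and trivially $P_k \ge P_k^{(F)}$. By Lemma~\ref{lem:RDE_conv}, under assumptions (ii) and (iii) both $P_k^{(b)} \to P^{(b)}$ and $P_k^{(F)} \to P^{(F)}$ strongly, these being the unique nonnegative DARE solutions, and (using Lemma~\ref{lem:tr_conv} and $P_k^{(b)} \le S$) the traces converge. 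So $\limsup_k \tr(P_k - P_k^{(F)}) \le \tr(P^{(b)} - P^{(F)})$. The remaining task is to bound $\tr(P^{(b)} - P^{(F)})$ in terms of $\norm{M}$, and hence in terms of $\norm{I-\Pi_s}_{\mathcal{L}(\sX_1,\sX)}^2$, since $\norm{M}_{\mathrm{tr}} \le \norm{(I-\Pi_s)}_{\mathcal{L}(\sX_1,\sX)}^2 \,\tr_{\sX_1}(\tilde S)$ for an appropriate trace-class bound on the state covariance in $\sX_1$-norm coming from (i).

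Third — and this is the analytic heart — I would invoke the sensitivity analysis for the DARE (Sun~\cite{Sun}, packaged presumably in the forthcoming Lemma~\ref{lem:L_properties}) to get $P^{(b)} - P^{(F)} = \mathcal{L}_{\mathrm{Sun}}(AMA^*) + \mathcal{O}(\norm{M}^2)$, where the linearized solution map has operator norm controlled by the Lyapunov operator associated with the closed-loop generator $A - K^{(F)}CA$; this is exactly where the constant $L$ and the factor $\norm{A-K^{(F)}CA}_{\mathcal{L}(\sX)}^2$ (from the $AMA^*$ in the load) appear, and where the stability hypothesis (iii) makes the linearization well-defined and bounded. Combining: $\tr(P^{(b)}-P^{(F)}) \le (1 + L\norm{A-K^{(F)}CA}_{\mathcal{L}(\sX)}^2)\norm{M}_{\mathrm{tr}} + \mathcal{O}(\norm{M}^2)$, and then bounding $\norm{M}_{\mathrm{tr}} \le \norm{I-\Pi_s}_{\mathcal{L}(\sX_1,\sX)}^2 \sup_k\Ex{\norm{x_k}_{\sX_1}^2}$ yields the stated bound with the claimed $C$. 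The phrase "if $\norm{I-\Pi_s}_{\mathcal{L}(\sX_1,\sX)}$ is small enough" is precisely the domain of validity of Sun's perturbation expansion (the load perturbation must be within the radius guaranteeing the perturbed DARE still has a stabilizing solution).

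The main obstacle I anticipate is making the first step — the identity relating $\Ex{\norm{Q_k\tilde x_k-\hat x_k}_\sX^2}$ to $\tr(P_k)-\tr(P_k^{(F)})$ — genuinely rigorous, since the two estimators condition on information that is \emph{not} nested, so one cannot directly apply the Pythagorean relation~\eqref{eq:pythagoras}; one must argue that the extra information lost (the full filtration versus just $(\tilde x_{k-1},y_k)$) contributes a term that is itself controlled by $\norm{I-\Pi_s}$, or else reformulate the claim so that $Q_k\tilde x_k$ is compared to its own projection of $\hat x_k$. A secondary technical point is justifying the trace-class/strong-convergence interchange uniformly in $k$ so that the $\limsup$ genuinely reduces to the algebraic DARE difference, which should follow from Lemmas~\ref{lem:RDE_conv} and~\ref{lem:tr_conv} but needs the uniform domination $P_k \le S$.
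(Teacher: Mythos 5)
Your overall architecture is the same as the paper's: Pythagorean identity for the discrepancy, comparison of the loaded RDE with a constant-load majorant via Lemma~\ref{lem:mono}, passage to the DARE limits via Lemmas~\ref{lem:RDE_conv} and~\ref{lem:tr_conv}, and Sun-type sensitivity analysis for $\tr(P^{(b)}-P^{(F)})$. However, the step you flag as the ``main obstacle'' is left genuinely unresolved, and it resolves differently from what you guess. There is no extra correction term and no non-nestedness problem: $\tilde x_k$ is by construction a deterministic linear function of $y_1,\dots,y_k$, so $\mathcal{S}(\tilde x_k)\subset\mathcal{S}(y_1,\dots,y_k)$, and by Proposition~\ref{prop:cond} (tower property) $Q_k\tilde x_k=\Ex{x_k|\tilde x_k}=\Ex{\hat x_k|\tilde x_k}$. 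Hence \eqref{eq:pythagoras} applies exactly and gives
\begin{equation} \nonumber
\Ex{\norm{Q_k\tilde x_k-\hat x_k}_\sX^2}=\Ex{\norm{x_k-Q_k\tilde x_k}_\sX^2}-\Ex{\norm{x_k-\hat x_k}_\sX^2}=\tr(P_k)-\tr\big(P_k^{(F)}\big)
\end{equation}
with no remainder; assumption (i) enters only later, in bounding $\tr(M)$. Relatedly, your bookkeeping of where the ``$1$'' in $C$ comes from is off: you cannot apply Lemma~\ref{lem:mono} directly to $P_k$, because in \eqref{eq:RDE_err} the load $M_k$ is added \emph{after} the measurement update, whereas Lemma~\ref{lem:mono} compares RDEs whose loads sit in the prediction step. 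The paper's device is the shifted covariance $P_k^{(a)}:=P_k-M_k$, which satisfies a clean RDE with load $BUB^*+AM_kA^*$; then $P_k^{(F)}\le P_k^{(a)}\le P_k^{(b)}$ and $\tr(P_k)=\tr(P_k^{(a)})+\tr(M_k)$, so the limit bound is $\tr(\Delta P)+\tr(M)$. The separate $\tr(M)$ term is what produces the summand $1$ in $C=(1+L\norm{A-K^{(F)}CA}_{\mathcal{L}(\sX)}^2)\sup_k\Ex{\norm{x_k}_{\sX_1}^2}$; it does not come out of the DARE sensitivity bound, which only yields the $L\norm{A-K^{(F)}CA}_{\mathcal{L}(\sX)}^2\tr(M)$ part at first order.

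A second gap: Lemma~\ref{lem:RDE_conv} gives convergence of $P_k^{(b)}$ only from the zero initial condition, while assumption (iii) covers arbitrary initial conditions only for the \emph{full-filter} RDE. Your argument therefore works as stated only when $S_0=0$. For general $S_0\ge 0$ one must additionally show that $A-K^{(b)}CA$ is exponentially stable; the paper does this with a Neumann-series/Woodbury resolvent perturbation argument showing that the spectral radius of $A-K^{(b)}CA$ is at most $\rho+\norm{\Delta K\,CA}_{\mathcal{L}(\sX)}$, which is below $1$ once $\tr(M)$ --- hence $\norm{I-\Pi_s}_{\mathcal{L}(\sX_1,\sX)}$ --- is small enough. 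This is also part of what the hypothesis ``small enough'' is quantifying, in addition to the radius of validity of the sensitivity expansion that you correctly identify.
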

\begin{proof} 
  

  Assume first that the initial state is completely known, that is,
  $S_0=0$.  Let $P_k$ be the error covariance of the reduced order
  method, satisfying the RDE~\eqref{eq:RDE_err} and $M_k$ be defined
  in \eqref{eq:def_M}. It is easy to confirm that the \emph{shifted
    covariance} $P_k^{(a)}:=P_k-M_k$ satisfies the RDE
\begin{equation} \nonumber
\begin{cases}
  \tilde P_k^{(a)}=AP_{k-1}^{(a)}A^*+BUB^*+AM_kA^*, \\
  P_k^{(a)}=\tilde P_k^{(a)}-\tilde P_k^{(a)} C^*\big(C\tilde
  P_k^{(a)}C^*+R \big)^{-1}C\tilde P_k^{(a)}.
\end{cases}
\end{equation}
Then denote by $P_k^{(b)}$ and $\tilde P_k^{(b)}$ the solution of a
similar RDE but with the term $AM_kA^*$ replaced by $AMA^*$ where $M$
is the upper bound for $M_k$, defined in \eqref{eq:def_M}.  Finally,
let $P_k^{(F)}$ be the error covariance of the full Kalman filter
estimate, given in \eqref{eq:full_RDE} and $\hat{x}_k=\Ex{x_k|\{y_j,j
  \le k\}}$ is given in \eqref{eq:KF}.


By computing the trace of both sides of \eqref{eq:pythagoras}, we see
that for a Gaussian random variable $[h_1,h_2]$ it holds that
\[
\Ex{\norm{h_1}_{\sX}^2}=\Ex{\norm{\Ex{h_1|h_2}}_{\sX}^2}+\Ex{\norm{h_1-\Ex{h_1|h_2}}_{\sX}^2}.
\]
Now $\tilde x_k$ depends linearly on $[y_1,...,y_k]$ and thus clearly
$\mathcal{S}( \tilde x_k) \subset \mathcal{S}(y_1,...,y_k)$. By
Proposition~\ref{prop:cond}, it holds that $Q_k\tilde
x_k=\Ex{x_k|\tilde x_k}=\Ex{\hat x_k|\tilde x_k}$. Thus it holds that
\begin{align*} & \hspace{-5mm}\Ex{\norm{Q_k \tilde x_k-\hat
      x_k}_{\sX}^2}=\Ex{\norm{\hat x_k}_{\sX}^2}-\Ex{\norm{Q_k \tilde x_k}_{\sX}^2} \\
  \hspace{5mm} &=\Ex{\norm{x_k}_{\sX}^2}-\Ex{\norm{Q_k \tilde x_k}_{\sX}^2}-
  \left(\Ex{\norm{x_k}_{\sX}^2}-\Ex{\norm{\hat x_k}_{\sX}^2}
  \right) \\
  &=\Ex{\norm{x_k-Q_k \tilde x_k}_{\sX}^2}-\Ex{\norm{x_k-\hat
      x_k}_{\sX}^2}=\tr\big(P_k^{(a)} \big)+\tr(M_k)-\tr\big(P_k^{(F)}
  \big). 
\end{align*}
By Lemmas~\ref{lem:M_k} and \ref{lem:mono}, $P_k^{(F)} \le P_k^{(a)}
\le P_k^{(b)}$ and thus $\tr \big(P_k^{(a)}\big)-\tr
\big(P_k^{(F)}\big) \le \tr
\big(P_k^{(b)}\big)-\tr\big(P_k^{(F)}\big)$. By
Lemma~\ref{lem:RDE_conv}, $P_k^{(b)} \to P^{(b)}$ and $P_k^{(F)} \to
P^{(F)}$ strongly (recall $S_0=0$) where $P^{(b)}$ and $P^{(F)}$ are
the solutions of the corresponding DAREs, that is,
equation~\eqref{eq:DARE} with $W^{(b)}=BUB^*+AMA^*$ and
$W^{(F)}=BUB^*$. Also, by Lemma~\ref{lem:tr_conv}, $\tr\big(P_k^{(b)}
\big) \to \tr\big(P^{(b)}\big)$ and $\tr\big(P_k^{(F)} \big) \to
\tr\big(P^{(F)}\big)$. Denote $\Delta P:=P^{(b)}-P^{(F)}$ and note
that $\Delta P \in \mathcal{L}^*(\sX)$ is a positive (semi-)definite
trace class operator. Then an upper bound for the discrepancy is given
by
\begin{equation} \label{eq:limsup}
\limsup_{k \to \infty}\Ex{\norm{Q_k \tilde x_k-\hat x_k}_{\sX}^2} \le
\tr(\Delta P)+\tr(M).
\end{equation}

Equation~\eqref{eq:DP} in Lemma \ref{lem:DP} gives a representation
for $\Delta P$. The next step is to use this equation to find a bound
for $\tr(\Delta P)$.
Because the full Kalman filter is assumed to be exponentially stable,
by Lemmas~\ref{lem:L_properties} and~\ref{lem:DP}, we have
\begin{equation} \nonumber
\tr(\Delta P) \le \tr \big({\bf L}^{-1}(E_1+E_2+h_1(\Delta P)) \big)
\end{equation}
where ${\bf L} \in \mathcal{L}(\mathcal{L}^*(\sX))$ is defined in
Lemma~\ref{lem:L_properties} and $E_1$, $E_2$, and $h_1(\Delta P)$ are
defined in Lemma~\ref{lem:DP}. The term $h_2(\Delta P)$ in
\eqref{eq:DP} is excluded here because it is negative definite (see
the discussion after Lemma~\ref{lem:L_properties}).

Now we have $E_1 \ge 0$ and so by Lemma~\ref{lem:L_properties},
\begin{equation} \nonumber \tr({\bf L}^{-1}E_1) \le L\tr(E_1) \le
  L\norm{A- K^{(F)}CA}_{\mathcal{L}(\sX)}^2 \tr(M)
\end{equation}
where $L$ is defined in Lemma~\ref{lem:L_properties}. From $E_2$ the
negative definite part can be omitted and thus
\begin{align*} & \hspace{-5mm}\tr({\bf L}^{-1}E_2) \le L\norm{
    K^{(F)}C}_{\mathcal{L}(\sX)}^2 \! \tr \! \left( \! AMA^*C^*\left( C(\tilde
      P^{(F)} \! +AMA^*)C^* \! +R \right)^{-1} \! CAMA^* \! \right) \\
  \hspace{5mm}& \le L\norm{K^{(F)}C}_{\mathcal{L}(\sX)}^2
  \norm{A}_{\mathcal{L}(\sX)}^4\norm{C}_{\mathcal{L}(\sX,\sY)}^2\tr \!
  \left( \!\! \left(
      C(\tilde P^{(F)}+AMA^*)C^* \! + R \right)^{-1}\right)\tr(M)^2 \\
  & \le L\norm{K^{(F)}C}_{\mathcal{L}(\sX)}^2
  \norm{A}_{\mathcal{L}(\sX)}^4\norm{C}_{\mathcal{L}(\sX,\sY)}^2\tr \!
  \left( \!\! \left( C\tilde P^{(F)}C^* \! +R
    \right)^{-1}\right)\tr(M)^2.
\end{align*}
To get a bound for $\tr\left({\bf L}^{-1}h_1(\Delta P) \right)$,
recall the following properties of the operator trace and the
Hilbert-Schmidt norm:
\begin{equation} \nonumber \norm{AB}_{HS} \le
  \norm{A}_{\mathcal{L}(\sX)}\norm{B}_{HS}, \quad \norm{A}_{HS} \le \tr(A),
  \textrm{ for } A \in \mathcal{L}^*(\sX), \ A \ge 0, 
\end{equation}
\begin{equation} \nonumber
\textrm{and} \quad
\tr(AB) \le \norm{A}_{HS}\norm{B}_{HS}.
\end{equation}
Using these and \eqref{eq:L_inv} yields
$\tr\left({\bf L}^{-1}h_1(\Delta P) \right)$
\begin{equation} \nonumber
 \le L_0 \left(2\norm{A-
      K^{(F)}CA}_{\mathcal{L}(\sX)}\norm{CA}_{\mathcal{L}(\sX,\sY)}\norm{\Delta
      K}_{HS} +\norm{CA}_{\mathcal{L}(\sX,\sY)}^2 \norm{\Delta K}_{HS}^2 \right)
  \tr(\Delta P)
\end{equation}
where $L_0$ is defined in Lemma~\ref{lem:L_properties}, $\Delta K=
K^{(F)}- K^{(b)}$, and $K^{(b)}=\tilde P^{(b)}C^*(C\tilde P^{(b)}C^*+R)^{-1}$.
By the last part of Lemma~\ref{lem:DP}, we have
\begin{equation} \label{eq:DK}
\norm{\Delta K}_{HS} \le \big(\hat c_1+ \hat c_2 \tr(M) \big)\tr(M)
\end{equation}
where
\begin{align*}  \hat c_1= &\left(1+ \norm{\tilde
      P^{(F)}}_{\mathcal{L}(\sX)}\norm{C}_{\mathcal{L}(\sX,\sY)}^2\norm{\left(
        C\tilde P^{(F)}C^*+R \right)^{-1}}_{\mathcal{L}(\sY)}\right) \times \\
& \qquad \times
  \norm{C}_{\mathcal{L}(\sX,\sY)}\norm{A}_{\mathcal{L}(\sX)}^2\norm{\left(
      C\tilde P^{(F)}C^*+R \right)^{-1}}_{\mathcal{L}(\sY)}
\end{align*}
\begin{equation} \nonumber \hspace{-20mm} \textrm{and} \hspace{21mm}
  \hat c_2=\norm{A}_{\mathcal{L}(\sX)}^4\norm{C}_{\mathcal{L}(\sX,\sY)}^3\norm{\left(
      C\tilde P^{(F)}C^*+R \right)^{-1}}_{\mathcal{L}(\sY)}^2.
\end{equation}
Collecting these inequalities we finally get
\begin{equation} \label{eq:tr_DP} \tr(\Delta P) \le \frac{a
    \tr(M)+b\tr(M)^2}{1-\big(c_1\tr(M)+c_2\tr(M)^2+c_3\tr(M)^3+c_4\tr(M)^4\big)}
\end{equation}
where
\begin{align*}
  a&=L\norm{A-K^{(F)}CA}_{\mathcal{L}(\sX)}^2, \\
  b&=L\norm{K^{(F)}C}_{\mathcal{L}(\sX)}^2
  \norm{A}_{\mathcal{L}(X)}^4\norm{C}_{\mathcal{L}(\sX,\sY)}^2\tr
  \left( \! \left(
      C\tilde P^{(F)}C^*+R \right)^{-1}\right), \\
  c_1&=2L_0\norm{A-
    K^{(F)}CA}_{\mathcal{L}(\sX)}\norm{CA}_{\mathcal{L}(\sX,\sY)}\hat c_1, \\
  c_2&=2L_0\norm{A-
    K^{(F)}CA}_{\mathcal{L}(\sX)}\norm{CA}_{\mathcal{L}(\sX,\sY)}\hat c_2+L_0\norm{CA}_{\mathcal{L}(\sX,\sY)}^2\hat c_1^2, \\
  c_3&=2L_0\norm{CA}_{\mathcal{L}(\sX,\sY)}^2 \hat c_1 \hat c_2, \\
  c_4&=L_0\norm{CA}_{\mathcal{L}(\sX,\sY)}^2 \hat c_2^2.
\end{align*}

To complete the proof under the assumption $S_0=0$, use
\eqref{eq:limsup}, \eqref{eq:tr_DP}, and note that by the definition
of $M$ in \eqref{eq:def_M} and $S$ in assumption {\it
  (ii)}, 
\begin{equation} \nonumber \tr(M) =\sup_k
  \Ex{\norm{(I-\Pi_s)x_k}_{\sX}^2} \le
  \norm{I-\Pi_s}_{\mathcal{L}(\sX_1,\sX)}^2\sup_k
  \Ex{\norm{x_k}_{\sX_1}^2}.  \vspace{-3mm}
\end{equation}

In case $S_0 > 0$, the convergence $P_k^{(b)} \to P^{(b)}$ has to be
established. Denote $\Phi=A-K^{(F)}CA$ and $\Delta \Phi=\Delta
KCA$. Pick $\lambda \in \mathbb{C}$ from the resolvent set of
$\Phi$. Then using the Woodbury formula, we get
\begin{align*}
  &\left(\lambda-(A-K^{(b)}CA)\right)^{-1}=\left(\lambda-\Phi-\Delta
    \Phi \right)^{-1} \\
  =&(\lambda-\Phi)^{-1}+(\lambda-\Phi)^{-1}\Delta \Phi \left(
    I-(\lambda-\Phi)^{-1}\Delta \Phi \right)^{-1}(\lambda-\Phi)^{-1}
\end{align*}
and
$\hspace{25mm} \displaystyle \norm{(\lambda-\Phi)^{-1}\Delta \Phi}_{\mathcal{L}(\sX)} \le
\frac{\norm{\Delta \Phi}_{\mathcal{L}(\sX)}}{|\lambda | - \rho}$ \newline
where $\rho<1$ is the spectral radius of $\Phi$.  The invertibility of
$\lambda-(A-K^{(b)}CA)$ is then guaranteed if $\norm{\Delta
  KCA}_{\mathcal{L}(\sX)} < |\lambda |-\rho$ which implies that the
spectral radius of $A-K^{(b)}CA$ is at most $\rho+\norm{\Delta
  KCA}_{\mathcal{L}(\sX)}$. So when $\tr(M)$ is small enough, then
also $A-K^{(b)}CA$ is exponentially stable and $P_k^{(b)} \to P^{(b)}$
strongly.
\end{proof}


The assumption {\it (iii)} in Theorem~\ref{thm:error} is very
difficult to check. Also, it is hard to say what it means that
``$\norm{I-\Pi_s}_{\mathcal{L}(\sX_1,\sX)}$ {\it is small enough}''
which is related to the denominator in Eq.~\eqref{eq:tr_DP} and the
exponential stability of $A-K^{(b)}CA$.
Consequently, this theorem should be considered as an \emph{a priori}
convergence speed estimate when the discretization is refined, that
is, when $\norm{I-\Pi_s}_{\mathcal{L}(\sX_1,\sX)} \to 0$.

However, if one has already computed the operators $Q_k$ and $K_k$ and
they have converged to $Q_{\infty}$ and $K_{\infty}$ and it has turned
out that $\sigma(A-K_{\infty}CA) \subset B(0,\rho)$ for some $\rho <
1$, then by the same argument as in Theorem~\ref{thm:error} we get the
following improved error estimate:
\begin{theorem} \label{thm:posteriori}
  Make the assumptions (i) and (ii) in Theorem~\ref{thm:error}.
  Assume also that the operators $K_k$, $Q_k$, and $M_k$ related to
  the reduced order filter have converged to $K_{\infty}$, $Q_{\infty}$ and
  $M_{\infty}$, respectively, and $\sigma(A-K_{\infty}CA) \subset B(0,\rho)$
  for some $\rho < 1$.
Then
\begin{equation} \nonumber \limsup_{k \to \infty}\Ex{\norm{Q_k\tilde
      x_k- \hat{x}_k}_{\sX}^2} \le C_1
  \norm{I-\Pi_s}_{\mathcal{L}(\sX_1,\sX)}^2+C_2\norm{I-\Pi_s}_{\mathcal{L}(\sX_1,\sX)}^4
\end{equation}
where $\displaystyle C_1=\left(1+\tilde L \norm{A-
    K^{(F)}CA}_{\mathcal{L}(\sX)}^2 \right)\sup_k \Ex{\norm{x_k}_{\sX_1}^2}$,
\[
\displaystyle C_2=\tilde L \norm{K^{(F)}C}_{\mathcal{L}(\sX)}^2
\norm{A}_{\mathcal{L}(\sX)}^4\norm{C}_{\mathcal{L}(\sX,\sY)}^2\tr \!
\left( \!\! \left( C\tilde P^{(F)}C^* \!+R
  \right)^{\! -1}\right) \!\! \left(\sup_k \Ex{\norm{x_k}_{\sX_1}^2 \! \right)^{\! 2}},
\]
 and $\tilde L$ is defined in
Lemma~\ref{lem:L_properties}.
\end{theorem}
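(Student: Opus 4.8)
The plan is to follow exactly the argument of Theorem~\ref{thm:error}, but replacing the \emph{a priori} bound coming from the assumed exponential stability of the full-state converged filter $A - K^{(F)}CA$ with the \emph{a posteriori} information that the \emph{reduced-order} converged filter $A - K_\infty CA$ is exponentially stable. First I would re-run the trace identity that appeared in the proof of Theorem~\ref{thm:error}: using \eqref{eq:pythagoras}, $\mathcal{S}(\tilde x_k)\subset\mathcal{S}(y_1,\dots,y_k)$, and Proposition~\ref{prop:cond} one still has
\[
\Ex{\norm{Q_k\tilde x_k - \hat x_k}_\sX^2} = \tr\big(P_k^{(a)}\big)+\tr(M_k)-\tr\big(P_k^{(F)}\big),
\]
where $P_k^{(a)}=P_k-M_k$ solves the shifted RDE. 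As before, $P_k^{(F)}\le P_k^{(a)}\le P_k^{(b)}$ by Lemmas~\ref{lem:M_k} and~\ref{lem:mono}, so it suffices to bound $\tr(\Delta P)+\tr(M)$ with $\Delta P = P^{(b)}-P^{(F)}$ once convergence $P_k^{(b)}\to P^{(b)}$, $P_k^{(F)}\to P^{(F)}$ and the corresponding trace convergence (Lemmas~\ref{lem:RDE_conv} and~\ref{lem:tr_conv}) are in force.

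The key difference is \emph{which} closed-loop operator one linearizes the DARE sensitivity around. In Theorem~\ref{thm:error} the operator ${\bf L}$ of Lemma~\ref{lem:L_properties} (and the constants $L$, $L_0$) were built from $\Phi = A - K^{(F)}CA$; now, since $K_k\to K_\infty$ and $\sigma(A-K_\infty CA)\subset B(0,\rho)$, I would instead form the Stein-type operator $\tilde{\bf L}$ associated with $A - K_\infty CA$ and use Lemma~\ref{lem:L_properties} to get the bound $\tr(\tilde{\bf L}^{-1}E)\le \tilde L\,\tr(E)$ for $E\ge 0$. Feeding this into the representation \eqref{eq:DP} of Lemma~\ref{lem:DP} for $\Delta P$ and discarding the negative-definite contributions ($h_2(\Delta P)$ and the negative part of $E_2$) exactly as in the earlier proof gives $\tr(\Delta P)\le a\tr(M)+b\tr(M)^2 + (\text{higher order in }\tr(M))$ with the \emph{a posteriori} constants $a = \tilde L\norm{A-K^{(F)}CA}_{\mathcal{L}(\sX)}^2$ and $b$ as displayed in the statement; the self-referential $h_1(\Delta P)$ term is absorbed into the denominator of a bound like \eqref{eq:tr_DP}, which for small enough $\tr(M)$ contributes only at order $\tr(M)^2$ and hence into $C_2$. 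Combining with $\tr(M_k)\to\tr(M)\le \norm{I-\Pi_s}_{\mathcal{L}(\sX_1,\sX)}^2\sup_k\Ex{\norm{x_k}_{\sX_1}^2}$ and the identity above yields the claimed two-term estimate with $C_1 = (1+\tilde L\norm{A-K^{(F)}CA}_{\mathcal{L}(\sX)}^2)\sup_k\Ex{\norm{x_k}_{\sX_1}^2}$ and $C_2$ as stated.

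One technical point I would need to handle carefully, just as in Theorem~\ref{thm:error}, is that Lemma~\ref{lem:RDE_conv} only delivers $P_k^{(b)}\to P^{(b)}$ directly when $S_0=0$. For general $S_0\ge 0$ I would again use a Woodbury/Neumann-series argument to see that the closed-loop operator of the \emph{$P^{(b)}$-DARE} — whose converged gain $K^{(b)}$ differs from the relevant stable gain by $\Delta K$ with $\norm{\Delta K}$ controlled by $\tr(M)$ via Lemma~\ref{lem:DP} — remains exponentially stable for $\tr(M)$ small, so that $P_k^{(b)}\to P^{(b)}$ from any initial condition. Here I should linearize around $A-K_\infty CA$ rather than $A-K^{(F)}CA$, since that is the operator known \emph{a posteriori} to be stable; one checks $\norm{K^{(b)} - K_\infty}$ is itself $\mathcal O(\tr(M))$ (both converge to nearby gains as the discretization refines), so the perturbed spectral radius stays below $1$.

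The main obstacle is bookkeeping rather than a new idea: one must verify that every constant in Theorem~\ref{thm:error}'s proof that was pinned to $\Phi=A-K^{(F)}CA$ can be re-derived with $\tilde{\bf L}$ built from $A-K_\infty CA$ — in particular that Lemma~\ref{lem:L_properties} applies to this closed-loop operator and yields a finite $\tilde L$, $\tilde L_0$ — and that the lower-order cross terms really do collapse into the two powers $\norm{I-\Pi_s}_{\mathcal{L}(\sX_1,\sX)}^2$ and $\norm{I-\Pi_s}_{\mathcal{L}(\sX_1,\sX)}^4$ as asserted (all remaining terms being $\mathcal O(\tr(M)^3)=\mathcal O(\norm{I-\Pi_s}^6)$ and thus negligible in the stated $\limsup$ bound, or alternatively folded into $C_2$ for $\norm{I-\Pi_s}$ small). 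Since the structure of the estimate is identical to the earlier one — only the linearization point and hence the constants change — I expect the proof to reduce to the single sentence ``by the same argument as in Theorem~\ref{thm:error}, with ${\bf L}$ replaced by $\tilde{\bf L}$,'' modulo the general-$S_0$ convergence remark above.
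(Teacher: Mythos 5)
Your skeleton (the trace identity, $P_k^{(F)}\le P_k^{(a)}$, Lemmas~\ref{lem:M_k}, \ref{lem:mono}, \ref{lem:RDE_conv}, \ref{lem:tr_conv}, and switching to the Stein operator $\tilde{\bf L}$ built from $A-K_\infty CA$) is the right one, but as written your route does not deliver the theorem with the stated constants, because you apply the sensitivity analysis to the wrong perturbed DARE and to the wrong representation of $\Delta P$. The paper does not pass to the majorant $P^{(b)}$ here at all: since $K_k$, $Q_k$, $M_k$ are assumed to converge, the shifted covariance $P_k^{(a)}=P_k-M_k$ itself converges to the solution $P^{(a)}$ of the DARE with load $BUB^*+AM_\infty A^*$, and one bounds $\Delta P:=P^{(a)}-P^{(F)}$ directly. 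The payoff is that $\tilde P_k^{(a)}=\tilde P_k$, so the gain of the limiting $(a)$-DARE is exactly $K_\infty=\tilde P^{(a)}C^*(C\tilde P^{(a)}C^*+R)^{-1}$; hence in the alternative representation \eqref{eq:DP_alt} the closed loop is exactly $A-K_\infty CA$ --- the operator assumed stable \emph{a posteriori} --- and the self-referential term $h_1(\Delta P)$ has disappeared, absorbed into $\tilde{\bf L}$. Discarding $h_2(\Delta P)\le 0$ and the negative part of $E_2$ then gives $\tr(\Delta P)\le \tilde L\,\tr(E_1)+\tilde L\,\tr(E_2^+)\le a\,\tr(M)+b\,\tr(M)^2$ with no denominator and no smallness condition, which is precisely $C_1$ and $C_2$.

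Your version retains $P^{(b)}$ and the representation \eqref{eq:DP}, so $h_1(\Delta P)$ survives and you are forced back to a bound of the form \eqref{eq:tr_DP} with a denominator and an implicit ``$\tr(M)$ small enough'' requirement. Theorem~\ref{thm:posteriori} has no such hypothesis and asserts the bound with exactly the displayed $C_1$ and $C_2$; folding the $h_1$-contribution ``into $C_2$'' changes $C_2$ and reintroduces the \emph{a priori} character the a posteriori statement is designed to remove. Two further consequences: (1) the gain of the $(b)$-DARE is $K^{(b)}\ne K_\infty$, so \eqref{eq:DP_alt} with $\tilde{\bf L}$ is only exact for the $(a)$-perturbation, and your proposed fix (``$\norm{K^{(b)}-K_\infty}=\mathcal O(\tr(M))$'') again needs smallness; (2) your Woodbury argument for $S_0>0$ becomes unnecessary, since convergence of $P_k^{(a)}$ is already guaranteed by the convergence hypotheses on $K_k$, $Q_k$, $M_k$ rather than by a stability perturbation argument.
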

\begin{proof}
  The covariances $P_k^{(a)}$ and $\tilde P_k^{(a)}$ defined in the
  proof of Theorem~\ref{thm:error} converge to $P^{(a)}$ and $\tilde
  P^{(a)}$ that are the solution of the DARE
\[
\begin{cases}
 \tilde P^{(a)}=AP^{(a)}A^*+BUB^*+AM_{\infty}A^*, \\
  P^{(a)}=\tilde P^{(a)}-\tilde P^{(a)} C^*\big(C\tilde
  P^{(a)}C^*+R \big)^{-1}C\tilde P^{(a)}.
\end{cases}
\]
Now bounding $\Delta P:=P^{(a)}-P^{(F)}$ by using the alternative
expression \eqref{eq:DP_alt} for $\Delta P$ given in
Lemma~\ref{lem:DP} and otherwise proceeding as in the proof of
Theorem~\ref{thm:error} leads to the result. Note that 
\[
K_{\infty}=\lim_{k \to \infty}\tilde P_kC^*(C\tilde P_kC^*+R)^{-1}
\]
but since $\tilde P_k^{(a)}=\tilde P_k$ for all $k$, it holds that
$K_{\infty}=\tilde P^{(a)}C^*(C\tilde P^{(a)}C^*+R)^{-1}$.
\end{proof}


\begin{remark} \label{rem:constants} \textup{The coefficients $C_1$ and $C_2$
  in the above theorem depend on $K^{(F)}$ and $\tilde P^{(F)}$ which
  is not desirable. It is possible to bound these coefficients from
  above without computing them. Firstly, we have
  \begin{equation} \nonumber \norm{A-K^{(F)}CA}_{\mathcal{L}(\sX)}^2
    \le
    2\norm{A-K_{\infty}CA}_{\mathcal{L}(\sX)}^2+2\norm{CA}_{\mathcal{L}(\sX,\sY)}^2\norm{\Delta
      K}_{\mathcal{L}(\sY,\sX)}^2.
\end{equation}
Now $\norm{\Delta K}_{\mathcal{L}(\sY,\sX)} \le \norm{\Delta K}_{HS}$ for
which we have \eqref{eq:DK}, $\norm{\tilde
  P^{(F)}}_{\mathcal{L}(\sX)} \le \norm{\tilde P^{(a)}}_{\mathcal{L}(\sX)}$, 
$\norm{\left( C\tilde P^{(F)}C^*+R
    \right)^{-1}}_{\mathcal{L}(\sY)} \! \le \frac1{\min(\textup{eig}(R))}, \ $
  and $\ \tr \!
\left(\!\! \left( C\tilde P^{(F)}C^*+R
  \right)^{-1}\right) \le \tr(R^{-1})$.}
\end{remark}

\section{Numerical example} \label{sec:example}

In this section, Algorithm~\ref{alg:method} is implemented to the
temporally discretized 1D wave equation with damping,
\begin{equation} \label{eq:wave}
\begin{cases}
  \frac{\partial^2}{\partial t^2}z(x,t)=-\epsilon\frac{\partial}{\partial t}z(x,t)+\frac{\partial^2}{\partial x^2}z(x,t) + Bu(t), \qquad x \in [0,1], \\
  z(0,t)=z(1,t)=0, \\
  y(t)=Cz(x,t) + w(t), \\
  z(x,0)=z_0
\end{cases}
\end{equation}
where $u \in \mathbb{R}^3$ and $w \in \mathbb{R}^2$ are the formal
derivatives of Brownian motions with incremental covariances $U$ and
$R$, respectively. The initial state is a Gaussian random variable
$z_0 \sim N(0,P_0)$, and $u$, $w$, and $z_0$ are mutually
independent. The input operator $B$ is a multiplication operator but
we define its structure only on the discrete-time level. The output
operator $C \in \mathcal{L}(\sX,\mathbb{R}^2)$ is given by
$Cz=\left[\ip{c_1,z}_{L^2(0,1)},\ip{c_2,z}_{L^2(0,1)} \right]^T$ where
$c_1(x)=\frac{1.4}{(x+1)^{.7}}$ and $c_2(x)=\frac1{(2-x)^{.3}}$.

The equation is transformed to a first order differential equation
with respect to the time variable by introducing the augmented state
$\bbm{z \\ v}$ where $v=\frac{\partial}{\partial t}z$ is the velocity
variable. The natural augmented state space is $\sX=H_0^1[0,1] \times
L^2(0,1)$. In $H_0^1[0,1]:=\{ z \in H^1[0,1] \, | \, z(0)=z(1)=0 \}$
we use the norm $\norm{z}_{H_0^1[0,1]}^2:=\int_0^1z'(x)^2 dx$.  The
equation is then temporally discretized using the implicit Euler
method with time step $\Delta t$.  The state space discretization is
carried out by Finite Element Method using piecewise linear elements
on two meshes on the interval $[0,1]$. The first one is a finer mesh
with $N_f$ equispaced discretization points. The fine mesh solution is
regarded as the true solution. The second, coarse mesh consists of
$N_c$ discretization points, also equally spaced with discretization
intervals of length $h_c=1/(N_c+1)$. It is required that the function
space consisting of the piecewise linear elements on the coarse mesh
is a subspace of the fine mesh space. This is satisfied when $N_f+1 =
k(N_c + 1)$ for some integer $k$. The coarse mesh space is the range
of $\Pi$. In the augmented state of the discretized system, the input
operator is $B_d=\bbm{0 & 0 & 0 \\ b_1(x) & b_2(x) & b_3(x)}$ where
$b_1(x)=(1-x)\sin(\pi x)$, $b_2(x)=7x^2(1-x)$, and $b_3(x)=\sin(6\pi
x)^2/x$. The input noise covariance for the discrete time system is
$U_d=\Delta t U$.

The solution of \eqref{eq:wave} actually has additional smoothness,
namely $[z \ v]^T \in \sX_1 = \left(H_0^1[0,1] \cap H^2[0,1] \right)
\times H_0^1[0,1]$ almost surely --- note that $B_d \in
\mathcal{L}(\mathbb{R}^3,\sX_1)$. It is well known that the piecewise
linear elements approximate $H^2$-functions in one dimension so that
$\norm{z-\Pi_s z}_{H^1[0,1]} \le C_2 h_c\norm{z}_{H^2[0,1]}$ and
$H^1$-functions so that $\norm{v-\Pi_s v}_{L^2(0,1)} \le C_1
h_c\norm{v}_{H^1[0,1]}$, see for example \cite[Section~5.1]{vidar}.

Fig.~\ref{fig:example} (left) shows the state $z(x,t)$ together with
the three different state estimates in one simulation. The full state
Kalman filter estimate (F) and the reduced-order state estimate (A)
cannot be distinguished from each other. The third state estimate (C)
is computed in the coarse mesh without taking the discretization error
into account. The simulation parameters are shown in
Table~\ref{tab:parameters} (left). The spectral radius was .996 for
both the full state Kalman filter and the reduced-order filter.  We
are interested in the stationary Kalman filter and so the
simulations were first run 2000 steps to get rid of initial
transitions. The expected (squared) errors of the different methods
are shown in Table~\ref{tab:parameters} (right) separately for the
position variable $z$ and the velocity variable $v$.


As $h_c \to 0$, the expected squared difference between the
reduced-order estimate and full state Kalman filter estimate, $\lim_{k
  \to \infty}\Ex{\norm{Q_k\tilde x_k- \hat{x}_k}_{\sX}^2}$, tends to
zero. Fig.~\ref{fig:example} (right) illustrates this convergence in
the example case. Regression analysis gives $\lim_{k \to
  \infty}\Ex{\norm{Q_k\tilde x_k- \hat{x}_k}_{\sX}^2} \approx 86.8h^{7.06}$
whereas Theorem~\ref{thm:error} gives $\mathcal{O}(h^2)$ convergence
rate.

\begin{table}[t]
  \centering
\vspace{-2mm}
\caption{Left: Simulation parameters. Right: Squared error averages over 500 simulations.}
    \vspace{2mm}
  \begin{tabular}{cl}
    \hline
    \hline
 Symbol & value  \\
\hline 
$\Delta t$ & .01 \\
$U$ & diag(1 , 1 , .25) \\
$R$ & diag(.3 , .15) \\
$N_f$ & 65 \\
$N_c$ & 5 \\
$\epsilon$ & .4 \\
   \hline
    \hline
  \end{tabular}
\hspace{10mm}
\begin{tabular}{lccc}
    \hline
    \hline
    Method  & F & A &  C  \\
    \hline 
 Position & .6122 & .6126 & .6352  \\
    Velocity & .8150 & .8154 & .9294  \\
   \hline
    \hline
\vspace{12.65mm}
  \end{tabular}
  \label{tab:parameters}
\end{table}

\begin{figure} [t]
  \centering
\hspace{1mm}
\includegraphics[width=5.5cm]{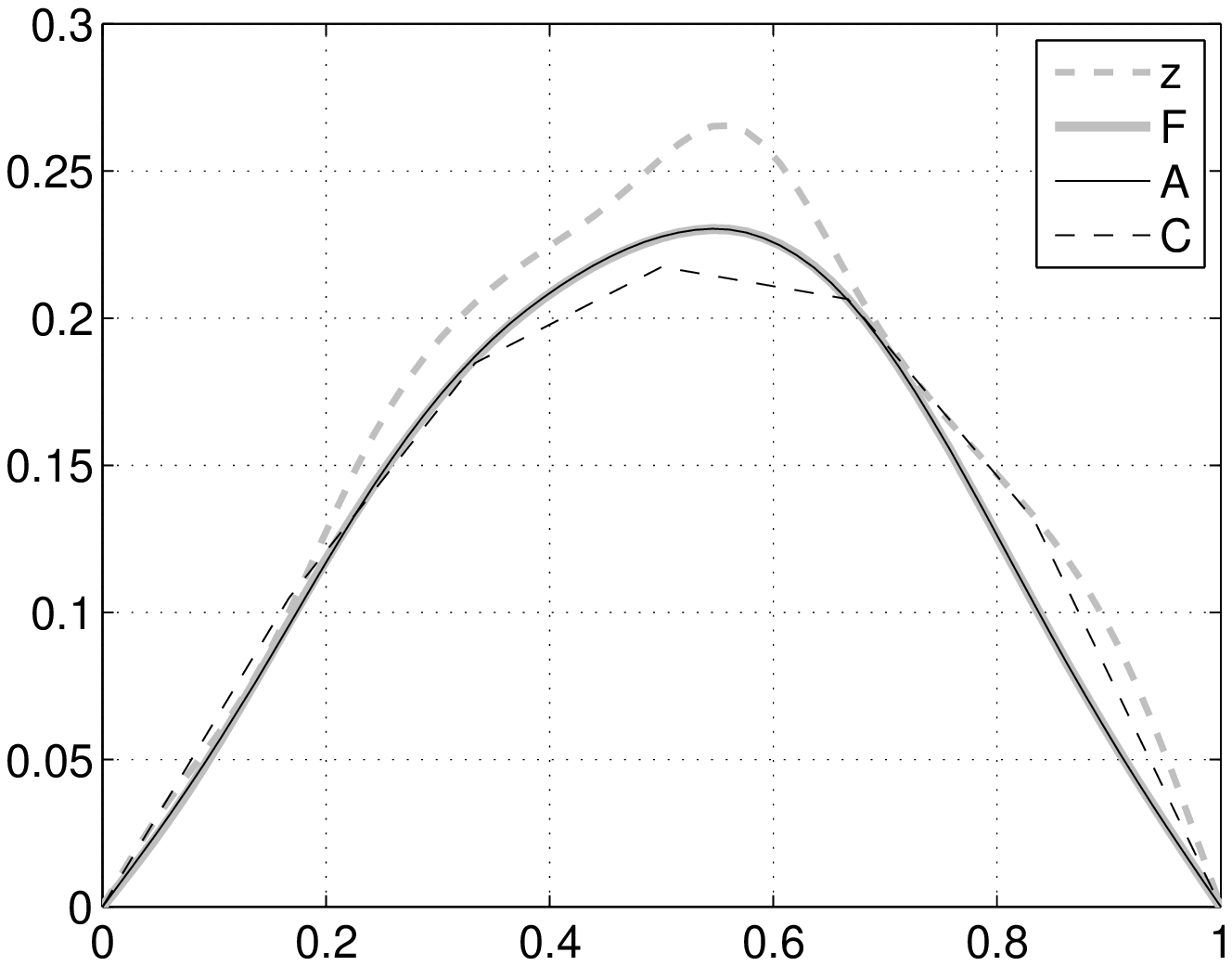} 
\includegraphics[width=5.5cm]{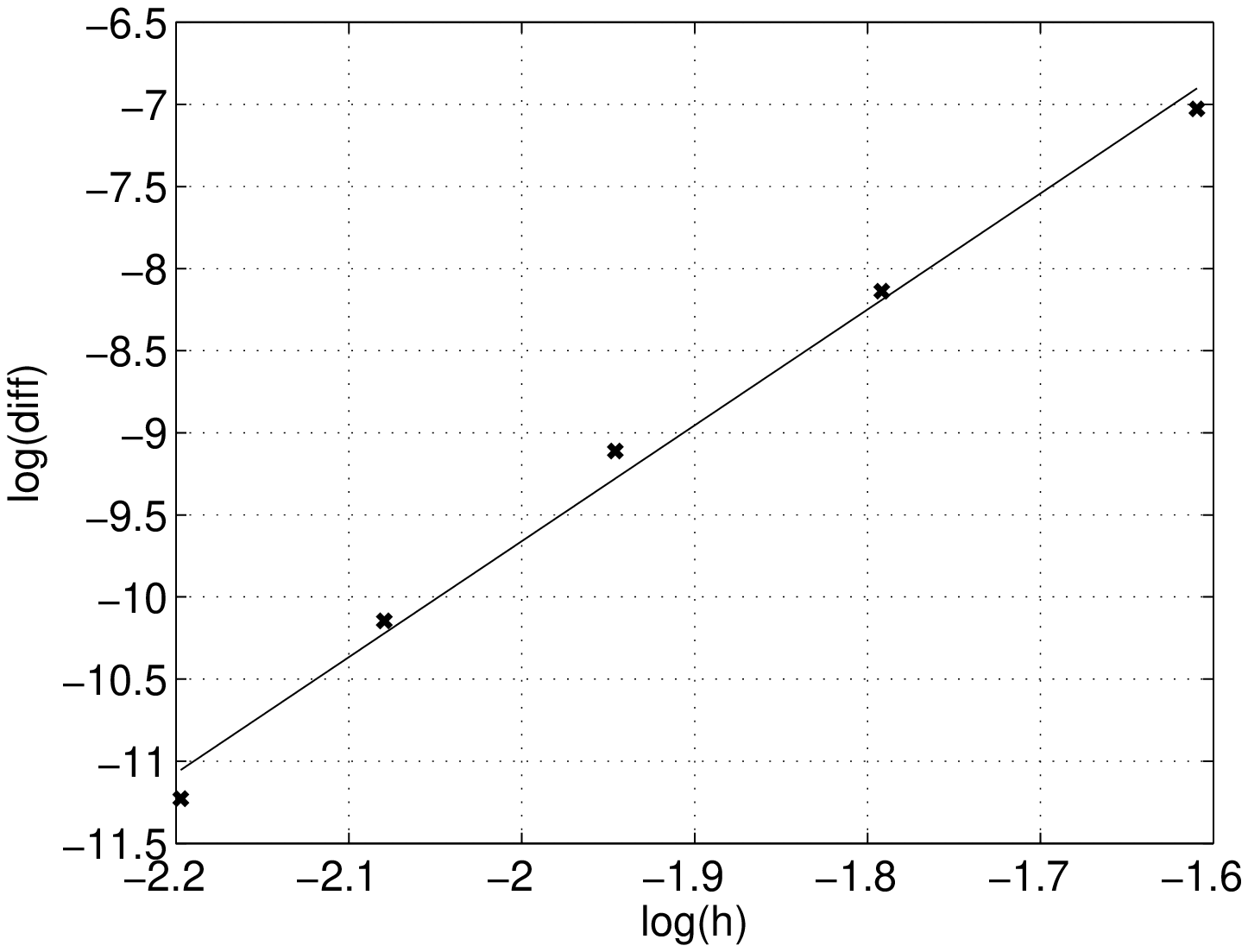} 
\vspace{-2mm}
\caption{Left: The true solution and estimates given by the three
  filtering methods. Right: The convergence of $\lim_{k \to
    \infty}\Ex{\norm{Q\tilde x_k- \hat{x}_k}_{\sX}^2}$ as $h_c \to 0$
  is shown with the x-markers. The solid line is a fitted regression
  line. The plot is in logarithmic scale.}
\label{fig:example}\end{figure}

\section{Conclusions and remarks}

When the system at hand is infinite dimensional (or its dimension is
very large), one needs to make some finite (or lower) dimensional
approximation of the system in order to be able to actually compute
something. For what comes to the Gaussian state estimation problem,
the spatial discretization introduces a bias in the Kalman filter but
the result can be improved by taking that error into account when
determining the Kalman gain.


In this paper, we derived the optimal one-step state estimator $\tilde
x_k$ for an infinite dimensional system that takes values in a
pre-defined finite dimensional subspace $\Pi_s\sX$ of the system's
state space $\sX$. The presented method also gives an operator $Q_k$
that gives $\Ex{x_k|\tilde x_k}=Q_k\tilde x_k$. This operator can be
used as a sort of post-processor of the obtained state estimate.

Sections \ref{sec:asymptotics} and \ref{sec:error} were devoted to
finding a bound for the error caused by the discretization. The error
measure is the $L^2(\Omega,\sX)$-distance between the reduced-order
state estimate $Q_k\tilde x_k$ and the full state Kalman filter
estimate $\hat x_k$, that is, $\Ex{\norm{Q_k\tilde x_k-
    \hat{x}_k}_{\sX}^2}$. It was found that this distance converges to
zero as the approximation abilities of the projection $\Pi_s$ improve.

A numerical example on temporally discretized 1D wave equation was
presented in Section~\ref{sec:example}. It was noted that the
presented method worked well even with fairly low level of
discretization. The spatial discretization was done using piecewise
linear hat functions whose approximating properties were noted to
converge with rate $\mathcal{O}(h)$ when the discretization is
refined. By Theorem~\ref{thm:error} this would imply convergence rate
$\Ex{\norm{Q_k\tilde x_k- \hat{x}_k}_{\sX}^2}=\mathcal{O}(h^2)$ for the
reduced-order state estimate. However, numerical simulations showed
that this convergence was actually of order $\mathcal{O}(h^7)$ in the
example case.

\subsection{On practical implementation} \label{sec:practice}

Even though all the computations needed for the update of the state
estimate are carried out in the finite dimensional subspace $\Pi_s\sX$
in the presented method, the offline computations needed for
determining the Kalman gains $K_k$ and the operators $Q_k$ are still
formally carried out in the infinite dimensional $\sX$. In practice,
there are very few cases where this can be done analytically, and even
then it is hardly worth the effort. A practical approach is proposed
in the example, namely introducing two computational meshes for the
problem at hand --- a fine mesh and a coarse mesh. The fine mesh
discretization is then regarded as the true system and $K_k$ and $Q_k$
are computed using this discretization. This mesh should be as fine
as reasonably possible. The online state estimation is then carried
out in the coarse mesh. Of course, the criterion for this mesh is that
the time evolution of the state estimator has to be solvable with the
available computing power in time before the next measurement arrives.

In practical implementation of the presented method, one weak point is
the computation of $Q_k$ which in theory requires computation of the
(pseudo)inverse of the $n \times n$ matrix $\tilde S_k$, see
\eqref{eq:Q_k}. As noted in Remark~\ref{rem:Q_add}, when $\tilde S_k$
is not invertible then $Q_k=\Pi^*+(I-\Pi_s)V_k\tilde S_k^+$. This
equation for $Q_k$ could also be used if the pseudoinverse is not
computed accurately, but by using some approximative or regularizing
scheme. Then the part that $Q_k$ maps to $\Pi_s \sX$ is readily taken
care of and from $V_k\tilde S_k^+$ one can compute an approximation to
a couple of the most important dimensions in the null space of $\Pi$.

We also remark that there is no guarantee that $Q_k$ and $K_k$ would
converge. Further, even if they do converge, there are no algebraic
equations for obtaining the limits directly. Thus, the only way to
obtain them is to iterate the recursive equation sufficiently many
times. However, consider the case that we are given $\Pi x_k$ and we
want to recover $x_k$. Then (assuming $\Ex{x_k}=0$) the optimal
solution is given by $\Ex{x_k|\Pi x_k}=:\widehat Q_k \Pi x_k$ where
\[
\widehat Q_k=\Pi^*+(I-\Pi_s)S_k\Pi^*(\Pi S_k \Pi^*)^+
\]
where $S_k=\Cov{x_k,x_k}$ is given by \eqref{eq:S_k}. Then we have
$x_k=\widehat Q_k \Pi x_k+v_k$
where $v_k \sim N(0,\widehat V_k)$ where 
\[
\widehat
V_k=(I-\Pi_s)S_k(I-\Pi_s)^*-(I-\Pi_s)S_k\Pi^*(\Pi S_k \Pi^*)^+\Pi S_k
(I-\Pi_s)^*.
\]
Now $S_k$ converges and the limit $S_{\infty}$ can be obtained as the
solution of the Lyapunov equation $S_{\infty}=AS_{\infty}A^*+BUB^*$.
Of course, the error $v_k$ is correlated but making the (false)
assumption that it is not, leads to an approximate reduced order error
covariance (in converged form)
\[
\begin{cases}
  \tilde P=\Pi A \widehat Q_{\infty}P\widehat Q_{\infty}^*A^*\Pi^*+\Pi
  B U B^* \Pi^*+\Pi A \widehat V_{\infty} A^* \Pi^*, \\
P=\tilde P-\tilde P \widehat Q_{\infty}^*C^*(C \widehat Q_{\infty} \tilde P \widehat Q_{\infty}^* C^*+R)^{-1}C \widehat Q_{\infty} \tilde P.
\end{cases}
\]
It was found that using this approximative state estimate worked
reasonably well in the presented example.  With the parameters on the
left in Table~\ref{tab:parameters}, the error $\norm{\widehat
  Q_{\infty} \tilde x_k-x_k}_{\sX}^2$ was in average over 500
simulations .6148 for the position variable and .8179 for the velocity
variable (cf. the right panel of Table~\ref{tab:parameters}).







\subsection{Further work}

Let us end the paper by briefly discussing topics that would require
further work. An immediate question is whether a similar result can be
obtained for the Kalman--Bucy filter, that is, for continuous time
systems. Here the discrete time systems were studied for technical
convenience but, in principle, there should not be any reasons why it
couldn't be done. For example the results of \cite{Bernstein},
\cite{Bernstein_DPS} and \cite{Galerkin_filter} were obtained in the
continuous time setting. In particular \cite{Galerkin_filter} might
give useful tools for treating this problem.

The dual problem to the Gaussian state estimation problem is the
optimal control problem for linear systems with quadratic cost
functions. A natural question is whether the results of this paper can
be translated to that problem. For example Mohammadi \emph{et al}. use
truncated eigenbasis approach to approximately solve the algebraic
Riccati equation arising from optimal control of a
diffusion-convection-reaction in \cite{Mohammadi}.

One topic that was not given much attention in this paper is the
optimality of the assumptions on the system. It is well known that the
classical Kalman filter might work just fine even though the
underlying system is not stable. We, on the other hand, used many
times the input stability of the system, \emph{i.e.}, the state
covariance is uniformly bounded by some trace class operator $S_k \le
S$. Also, we had to state as an assumption that the full state Kalman
filter is exponentially stable, that is, $\sigma(A-\hat KCA) \subset
B(0,\rho)$ for some $\rho<1$. Relaxing this assumption would be
desirable since for example strong (that is, asymptotical) stability
of the full state filter is proved in \cite[Theorem 4.2]{Horowitz_phd}
--- although under a controllability assumption that would exclude
finite dimensional control.







\subsection*{Acknowledgements}

The author has been supported by the Finnish Graduate School in
Engineering Mechanics.  The author thanks Dr. Jarmo Malinen for
valuable comments on the manuscript.

\appendix
\section{Auxiliary results}

\begin{lemma} \label{lem:L_properties} 

Define the operator ${\bf L}
  \in \mathcal{L}(\mathcal{L}^*(\sX))$ by
\begin{equation} \nonumber
{\bf L}W:=W-(A-K^{(F)}CA)W(A-K^{(F)}CA)^*
\end{equation}
where $\sigma(A-K^{(F)}CA) \subset B(0,\rho)$ for $\rho < 1$. This
operator has the following properties:
\begin{itemize}
\item[(i)] ${\bf L}$ is boundedly invertible.
\item[(ii)] If ${\bf L}W=X$, then $X \ge 0$ implies $W \ge 0$.
\item[(iii)] There exists a constant $L>0$ s.t. $\tr({\bf L}^{-1}X)
  \le L \tr(X)$ for all positive definite trace class operators $X \in
  \mathcal{L}^*(\sX)$. Denote by $L$ the smallest possible
  constant. Denote $L_0:=\sum_{j=0}^{\infty}\norm{(A-
    K^{(F)}CA)^{2j}}_{\mathcal{L}(\sX)}$. We have $L \le L_0 < \infty$.
\end{itemize}
Define also $\tilde {\bf L}W:=W-(A-K_{\infty}CA)W(A-K_{\infty}CA)^*$
where $K_{\infty}$ is the converged gain of the reduced order filter
(if it converges) and denote by $\tilde L$ the corresponding trace
bound for $\tilde {\bf L}^{-1}$.
\end{lemma}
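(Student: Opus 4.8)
The plan is to prove each of the three properties of $\mathbf{L}$ by exploiting the spectral radius bound $\sigma(A-K^{(F)}CA)\subset B(0,\rho)$ with $\rho<1$, which makes the Lyapunov-type operator $\mathbf{L}$ behave like a convergent geometric series. Write $\Phi:=A-K^{(F)}CA$ for brevity. The first observation is that the map $W\mapsto \Phi W\Phi^*$ on $\mathcal{L}^*(\sX)$ has spectral radius bounded by the square of the spectral radius of $\Phi$, hence strictly less than $1$; this is the standard fact that the spectrum of the Lyapunov operator is contained in $\{\mu\bar\nu : \mu,\nu\in\sigma(\Phi)\}$, so its spectral radius is $\le \rho^2<1$. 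Consequently $\mathbf{L}=\mathrm{Id}-(W\mapsto\Phi W\Phi^*)$ is boundedly invertible with Neumann series $\mathbf{L}^{-1}X=\sum_{j=0}^\infty \Phi^j X(\Phi^*)^j$, which converges in operator norm on $\mathcal{L}(\mathcal{L}^*(\sX))$. This establishes (i).

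For (ii), I would simply use this series representation: if $\mathbf{L}W=X$ then $W=\mathbf{L}^{-1}X=\sum_{j=0}^\infty \Phi^j X(\Phi^*)^j$, and each term $\Phi^j X(\Phi^*)^j$ is positive semidefinite whenever $X\ge 0$ (since $\langle e,\Phi^j X(\Phi^*)^j e\rangle = \langle (\Phi^*)^j e, X(\Phi^*)^j e\rangle\ge 0$). A norm-convergent sum of positive operators is positive, so $W\ge 0$. The convergence is in operator norm, which is enough to pass positivity to the limit.

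For (iii), the key point is that trace is linear, positive, and continuous for the ordering, and that for $X\ge 0$ one has $\tr(\Phi^j X(\Phi^*)^j)=\tr((\Phi^*)^j\Phi^j X)\le \norm{(\Phi^*)^j\Phi^j}_{\mathcal{L}(\sX)}\tr(X)=\norm{\Phi^j}_{\mathcal{L}(\sX)}^2\,\tr(X)$. Actually, using the self-adjoint structure, $\norm{\Phi^j}_{\mathcal{L}(\sX)}^2 = \norm{(\Phi^*)^j\Phi^j}_{\mathcal{L}(\sX)}$ and one can also write it via $\norm{\Phi^{2j}}$ up to equivalence; in any case summing over $j$ gives $\tr(\mathbf{L}^{-1}X)\le\big(\sum_{j=0}^\infty\norm{\Phi^j}_{\mathcal{L}(\sX)}^2\big)\tr(X)$. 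To identify this constant with $L_0=\sum_{j}\norm{\Phi^{2j}}_{\mathcal{L}(\sX)}$ I would note $\norm{\Phi^j}^2$ and $\norm{\Phi^{2j}}$ are comparable (indeed $\norm{\Phi^{2j}}\le\norm{\Phi^j}^2$, giving the bound with $L_0$ potentially larger), and that $\sum_j\norm{\Phi^{2j}}<\infty$ follows from Gelfand's formula: since the spectral radius of $\Phi$ is $\rho<1$, $\norm{\Phi^{2j}}^{1/(2j)}\to\rho$, so the series converges by the root test. Defining $L$ as the smallest admissible constant then trivially gives $L\le L_0<\infty$, and $L>0$ since taking $X$ with $\tr(X)>0$ forces $\tr(\mathbf{L}^{-1}X)\ge\tr(X)>0$.

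The main obstacle — really the only subtlety — is justifying that the spectral radius of the Lyapunov operator $W\mapsto\Phi W\Phi^*$ on $\mathcal{L}^*(\sX)$ is strictly less than $1$ in the infinite-dimensional setting, i.e. that the Neumann series actually converges in the operator-norm topology on $\mathcal{L}(\mathcal{L}^*(\sX))$ rather than merely strongly. The clean way around this is to avoid spectral theory of the Lyapunov operator altogether and argue directly: by Gelfand's formula applied to $\Phi\in\mathcal{L}(\sX)$, choose $r$ with $\rho<r<1$ and $N$ with $\norm{\Phi^j}_{\mathcal{L}(\sX)}\le Cr^j$ for all $j$; then $\sum_j\norm{\Phi^j X(\Phi^*)^j}_{\mathcal{L}(\sX)}\le C^2\norm{X}_{\mathcal{L}(\sX)}\sum_j r^{2j}<\infty$, so the series defining $\mathbf{L}^{-1}X$ converges absolutely in $\mathcal{L}(\sX)$; verifying $\mathbf{L}(\mathbf{L}^{-1}X)=X$ and boundedness of $X\mapsto\mathbf{L}^{-1}X$ is then routine, and this simultaneously delivers (i), (ii), and (iii). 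The final claim about $\tilde{\mathbf{L}}$ and $\tilde L$ is identical with $\Phi$ replaced by $A-K_\infty CA$ and needs no separate argument.
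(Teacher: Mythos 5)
Your proof is correct and follows essentially the same route as the paper: the Neumann series $\mathbf{L}^{-1}X=\sum_{j\ge0}\Phi^jX(\Phi^*)^j$ (writing $\Phi=A-K^{(F)}CA$) with convergence justified by Gelfand's formula, termwise positivity for (ii), and the trace inequality $\tr(TXT^*)\le\norm{T}_{\mathcal{L}(\sX)}^2\tr(X)$ for (iii). Your remark that this argument naturally produces the constant $\sum_{j}\norm{\Phi^j}_{\mathcal{L}(\sX)}^2$ rather than $L_0=\sum_j\norm{\Phi^{2j}}_{\mathcal{L}(\sX)}$ (the smaller quantity, since $\norm{\Phi^{2j}}\le\norm{\Phi^j}^2$) is well taken, but this looseness is present in the paper's own proof of (iii) as well, which likewise invokes only $\tr(TXT^*)\le\norm{T}^2\tr(X)$ with $T=\Phi^j$.
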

\begin{proof}
{\it (i)}: The inverse of ${\bf L}$ is given by
\begin{equation} \label{eq:L_inv}
{\bf L}^{-1}X=\sum_{j=0}^{\infty}(A-K^{(F)}CA)^jX((A-K^{(F)}CA)^*)^j.
\end{equation}
By Gelfand's formula (see \cite[Theorem~7.5-5]{Kreyszig}), the sum
converges in operator topology because $\sigma(A-K^{(F)}CA) \subset
B(0,\rho)$ for some $\rho<1$.

{\it (ii)}: Assume that $X \in \mathcal{L}^*(\sX)$ is positive
semidefinite. From \eqref{eq:L_inv}  
it is easy to see that ${\bf L}^{-1}X$ is positive
semidefinite. Clearly also if $X$ is negative semidefinite then $W$ is
negative semidefinite.

{\it (iii)}: If $X \in \mathcal{L}^*(\sX)$ is a positive definite trace
class operator and $T \in \mathcal{L}(\sX)$ then $\tr(TXT^*) \le
\norm{T}_{\mathcal{L}(\sX)}^2 \tr(X)$. This together with
\eqref{eq:L_inv} imply {\it (iii)}.
\end{proof}

\noindent If ${\bf L}W=X_+-X_-$ where $X_+,X_- \ge 0$ then $W=W_+-W_-$
where ${\bf L}W_{\pm}=X_{\pm}$ and $W_+,W_- \ge 0$.  Of course $\tr(W)
\le \tr(W_+) \le L\tr(X_+)$. Thus, if the right hand side can be
represented as a sum of a positive definite and a negative definite
part, then only the positive definite part needs to be taken into
account when computing an upper bound for the trace of the solution.



\begin{lemma} \label{lem:DP}
  The perturbation $\Delta P$ in the proof of Theorem~\ref{thm:error}
  satisfies
\begin{equation} \nonumber
\Delta P=(A-K^{(F)}CA)\Delta P(A-K^{(F)}CA)^*+E_1+E_2+h_1(\Delta P)+h_2(\Delta P)
\end{equation}
\begin{equation} \label{eq:DP}
={\bf L}^{-1}\left(E_1+E_2+h_1(\Delta P)+h_2(\Delta P) \right)
\end{equation}
where 
\begin{equation} \nonumber
E_1=(I-K^{(F)}C)AMA^*(I-K^{(F)}C)^*,
\end{equation}
\begin{equation} \nonumber E_2=-(I-K^{(F)}C)AMA^*C^*\left( C(\tilde
    P^{(F)}+AMA^*)C^*+R \right)^{-1}CAMA^*(I-K^{(F)}C)^*
\end{equation}
\begin{equation} \nonumber
+K^{(F)}CAMA^*C^*\left( C(\tilde P^{(F)}+AMA^*)C^*+R \right)^{-1}CAMA^*C^*K^{(F)*},
\end{equation}
\begin{equation} \nonumber h_1(\Delta P)=\Delta KCA\Delta
  P(A-K^{(F)}CA)^*+(A-K^{(F)}CA)\Delta P (\Delta KCA)^*+\Delta KCA\Delta
  P(\Delta KCA)^* 
\end{equation}
where $\Delta K=K^{(F)}-K^{(b)}$, and
\begin{align*}  h_2(\Delta P)=&-(A-K^{(b)}CA)\Delta P A^*C^* \!  \left(
    C(\tilde P^{(F)} \! +AMA^* \! +A\Delta PA^*)C^* \! +R\right)^{-1} \! \times \\
& \qquad \qquad \times CA\Delta P (A-K^{(b)}CA)^*.
\end{align*}

Alternatively, the equation \eqref{eq:DP} can be written as
\begin{equation} \label{eq:DP_alt} \Delta P=(A-K^{(b)}CA)\Delta
  P(A-K^{(b)}CA)^*+E_1+E_2+h_2(\Delta P).
\end{equation}

The perturbation of the Kalman gain is given by
\begin{align*} \Delta K=&\left((\tilde
    P^{(F)}+AMA^*)C^*\left(C(\tilde P^{(F)}+AMA^*)C^*+R
    \right)^{-1}C-I\right)\times \\
&\qquad \qquad \times AMA^*C^*(C\tilde P^{(F)}C^*+R)^{-1}.
\end{align*}

\end{lemma}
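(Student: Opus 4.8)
The plan is to carry out the sensitivity analysis of Sun~\cite{Sun} for discrete-time algebraic Riccati equations in the present setting, exploiting that the output is finite dimensional so that the only operator ever inverted is the matrix $C\tilde PC^*+R$ (which exists since $R>0$) --- in particular neither $\tilde P^{(F)}$ nor $\tilde P^{(b)}$ need be invertible. Write $\Psi(\tilde P):=\tilde P-\tilde PC^*(C\tilde PC^*+R)^{-1}C\tilde P$ for the measurement-update map, $K_{\tilde P}:=\tilde PC^*(C\tilde PC^*+R)^{-1}$ for the associated gain, and $J(\tilde P,K):=(I-KC)\tilde P(I-KC)^*+KRK^*$ for the Joseph form. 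The workhorse is the completion-of-squares identity, valid for every $K$,
\begin{equation} \nonumber
J(\tilde P,K)=\Psi(\tilde P)+(K-K_{\tilde P})(C\tilde PC^*+R)(K-K_{\tilde P})^*,
\end{equation}
which in particular shows $\Psi(\tilde P)=J(\tilde P,K_{\tilde P})=\min_KJ(\tilde P,K)$. First I would subtract the two prediction equations, obtaining $\tilde P^{(b)}-\tilde P^{(F)}=A\Delta PA^*+AMA^*$ (and likewise with $M_\infty$ in the \emph{a posteriori} case), so the perturbation of the prediction covariance is affine in $\Delta P$ with ``load'' $AMA^*$.

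Using $P^{(b)}=\Psi(\tilde P^{(b)})=J(\tilde P^{(b)},K^{(b)})$ and $P^{(F)}=\Psi(\tilde P^{(F)})=J(\tilde P^{(F)},K^{(F)})$, I split
\begin{equation} \nonumber
\Delta P=\big(J(\tilde P^{(b)},K^{(b)})-J(\tilde P^{(b)},K^{(F)})\big)+\big(J(\tilde P^{(b)},K^{(F)})-J(\tilde P^{(F)},K^{(F)})\big).
\end{equation}
By the completion-of-squares identity the first bracket equals $-\Delta K(C\tilde P^{(b)}C^*+R)\Delta K^*$ with $\Delta K=K^{(F)}-K^{(b)}$, and since $J$ is affine in $\tilde P$ the second bracket equals $(I-K^{(F)}C)(A\Delta PA^*+AMA^*)(I-K^{(F)}C)^*=(A-K^{(F)}CA)\Delta P(A-K^{(F)}CA)^*+E_1$. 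Hence ${\bf L}\Delta P=E_1-\Delta K(C\tilde P^{(b)}C^*+R)\Delta K^*$, which is \eqref{eq:DP} in compact form, ${\bf L}$ being boundedly invertible by Lemma~\ref{lem:L_properties}. It then remains to expand the last term: I would compute $\Delta K$ from $K^{(i)}=\tilde P^{(i)}C^*(C\tilde P^{(i)}C^*+R)^{-1}$ by the identity for the difference of inverses, inserting $\tilde P^{(b)}=\tilde P^{(F)}+AMA^*+A\Delta PA^*$, which yields the stated closed form for $\Delta K$ as its ``$M$-only'' part; sorting the remaining contributions by order then produces the load $E_2$ (second order in $M$), the mixed remainder $h_1(\Delta P)$ (linear in $\Delta P$, carrying a factor $\Delta KCA$), and the quadratic remainder $h_2(\Delta P)$. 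The sign claim on $h_2$ follows at once from the factorization $h_2(\Delta P)=-(A-K^{(b)}CA)\Delta P^{1/2}\big[(CA\Delta P^{1/2})^*(C\tilde P^{(b)}C^*+R)^{-1}(CA\Delta P^{1/2})\big]\Delta P^{1/2}(A-K^{(b)}CA)^*\le0$, which is precisely why it may be dropped in the \emph{a priori} bound (cf.\ the remark after Lemma~\ref{lem:L_properties}). Finally, recombining $h_1$ with $(A-K^{(F)}CA)\Delta P(A-K^{(F)}CA)^*$ --- i.e.\ writing $A-K^{(b)}CA=(A-K^{(F)}CA)+\Delta KCA$ and expanding the quadratic form --- converts \eqref{eq:DP} into \eqref{eq:DP_alt}.

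The hard part is not conceptual but bookkeeping: arranging the many terms from the two expansions so that they land exactly in the buckets $E_1$, $E_2$, $h_1$, $h_2$ with the stated signs, and in particular identifying $\Delta K$ and $E_2$ precisely --- note that $E_2$ carries the ``frozen'' resolvent $(C(\tilde P^{(F)}+AMA^*)C^*+R)^{-1}$ whereas $h_2$ carries $(C\tilde P^{(b)}C^*+R)^{-1}$, the $A\Delta PA^*$-discrepancy being absorbed into the $h$-terms. Keeping every identity in the inverse-free form above ensures the argument is legitimate without invertibility of $\tilde P^{(F)}$ or $\tilde P^{(b)}$.
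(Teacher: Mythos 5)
Your completion-of-squares route is sound as far as it goes, and it is genuinely more self-contained than the paper's proof, which consists of a citation of Sun's Lemma~2.1 together with the observation that the algebra survives in infinite dimensions because the only inversion performed is that of the $m\times m$ matrix $C\tilde PC^*+R$. In particular, your intermediate identity
\begin{equation*}
{\bf L}\Delta P=E_1-\Delta K\big(C\tilde P^{(b)}C^*+R\big)\Delta K^*,\qquad \Delta K=K^{(F)}-K^{(b)},
\end{equation*}
is correct, your factorization showing $h_2(\Delta P)\le 0$ is correct, and your recombination argument for \eqref{eq:DP_alt} is exactly the paper's own remark following the lemma.

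The gap is the step you dismiss as bookkeeping: matching $-\Delta K(C\tilde P^{(b)}C^*+R)\Delta K^*$ to the specific $E_2+h_1(\Delta P)+h_2(\Delta P)$ of the statement. Two things go wrong if you try. First, your $\Delta K$ is the \emph{full} gain difference, which depends on $\Delta P$ through $\tilde P^{(b)}=\tilde P^{(F)}+AMA^*+A\Delta PA^*$, whereas the closed form at the end of the lemma is the gain perturbation induced by $AMA^*$ alone (write $G_N:=C(\tilde P^{(F)}+AMA^*)C^*+R$; one checks that the closed form equals $K^{(F)}-(\tilde P^{(F)}+AMA^*)C^*G_N^{-1}$, and also equals $-(I-K^{(F)}C)AMA^*C^*G_N^{-1}$). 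You note this (``the $M$-only part'') but never decide which of the two objects is the $\Delta K$ entering $h_1$ --- and it matters, because the subsequent bound \eqref{eq:DK} controls only the $M$-only perturbation by $\tr(M)$. Second, ``sorting by order'' does not land in the stated buckets: the zeroth-order-in-$\Delta P$ remainder produced by your expansion is $-\Delta K_M G_N\Delta K_M^*$ with $\Delta K_M$ the $M$-only perturbation, and by the identity above this equals exactly the \emph{first} summand of the stated $E_2$; the second summand $+K^{(F)}CAMA^*C^*G_N^{-1}CAMA^*C^*K^{(F)*}$ does not arise from your decomposition (a scalar numerical check of the two coupled DAREs confirms that including it destroys the identity, since the total $E_1+E_2+h_1+h_2$ is forced to equal $\Delta P-(A-K^{(F)}CA)\Delta P(A-K^{(F)}CA)^*$). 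So the deferred step is not routine: either it must be carried out explicitly --- resolving which $\Delta K$ enters $h_1$ and which terms genuinely constitute $E_2$ --- or one must fall back on Sun's lemma as the paper does. As written, your proposal establishes a correct and useful perturbation identity, but not the lemma in the precise form stated.
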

\noindent For a proof, see \cite[Lemma~2.1]{Sun}. There everything is
finite-dimensional but the proof of this Lemma is based on just
algebraic manipulation and it holds also in the infinite-dimensional
setting. Note that the matrix $C(\tilde P^{(F)}+M+A\Delta PA^*)C^*+R$ is
invertible because $C(\tilde P^{(F)}+M+A\Delta PA^*)C^* \ge 0$ and $R >
0$. In the proof of \cite[Lemma~2.1]{Sun}, some additional assumptions
on the perturbations is needed to guarantee the invertibility of the
corresponding matrix (denoted by $\tilde C$ there). To get 
\eqref{eq:DP_alt}, note that
\begin{equation} \nonumber h_1(\Delta P)=(A-K^{(b)}CA)\Delta
  P(A-K^{(b)}CA)^*-(A-K^{(F)}CA)\Delta P(A-K^{(F)}CA)^*. 
\end{equation}
For the last part, see in
particular \cite[Eq.~(A.8)]{Sun}.

\vspace{3mm}


\end{document}